\newtheorem{theorem}{Theorem}[section]
\newtheorem{lemma}[theorem]{Lemma}
\newtheorem{corollary}[theorem]{Corollary}
\theoremstyle{definition}
\theoremstyle{remark}
\newtheorem{remark}[theorem]{Remark}
\numberwithin{equation}{section}
\newcommand{\tstep}{\kappa}
\newcommand{\veps}{\varepsilon}
\newcommand{\cqd}{^{\text{\tiny CQ}}D_{t,\tstep}}
\newcommand{\cqw}{\omega^{\text{\tiny CQ}}}
\newcommand{\ub}{\mathbf{u}}
\newcommand{\dt}{D_t}
\newcommand{\Ld}{D_{t,\tstep}^\beta}
\newcommand{\inner}[2]{\left\langle #1,#2 \right\rangle}
\begin{document}

\title[A posteriori error analysis for subdiffusion]
{A posteriori error analysis for  approximations of time-fractional subdiffusion problems}

\author{Lehel Banjai}
\address{Maxwell Institute for Mathematical Sciences, School of Mathematical  
        \& Computer Sciences, Heriot-Watt University, Edinburgh EH14 4AS, UK}
\email{l.banjai@hw.ac.uk}

\author{Charalambos G.\  Makridakis}
\address{Institute for Applied and Computational Mathematics-FORTH, Heraklion-Crete, GR 70013, Greece and Department of Mathematics, University of Sussex, Brighton BN1 9QH, UK}
\email{c.g.makridakis@iacm.forth.gr}

\subjclass[2020]{Primary 35R11, 65M06, 65M15}

\date{}

\dedicatory{}

\begin{abstract}
In this paper we consider a sub-diffusion problem where the fractional time derivative is approximated  either by the L1 scheme or by Convolution Quadrature. 
We propose  new interpretations of the numerical schemes which lead to   a posteriori error estimates.  Our approach is based on  
appropriate   {pointwise representations} of the numerical schemes as perturbed evolution equations and on stability estimates for the evolution equation.  
A posteriori error estimates in $L^2(H)$ and  $L^\infty (H)$ norms of optimal order are derived. Extensive numerical experiments indicate the reliability and the optimality of the estimators for the schemes considered, as well as 
their  efficiency as error indicators driving adaptive mesh selection locating singularities of the problem.  
\end{abstract}

\maketitle

\section{Introduction}
We consider the sub-diffusion problem: Find $u$ such that
\begin{equation}
  \label{eq:sub_diff}
  \begin{aligned}
    \dt^\beta u(t) + Au(t) &= f(t),    & 0 < t < T,\\
    u(0) &= u^0,
  \end{aligned}
\end{equation}
where $\beta \in (0,1)$, $f: \mathbb{R}_{\geq 0} \rightarrow H$ a given inhomogeneity, and $u^0 \in H$ the initial data. Here $A$ is a positive definite, selfadjoint, linear operator on a Hilbert space $H$ with inner product $\inner{\cdot}{\cdot}$ and norm $\|\cdot\|$. {Equations \eqref{eq:sub_diff} are understood as equalities in the Hilbert space $H$.} We also denote $|u|_1 =\inner{Au}{u}^{1/2}$ for  $u \in V = D(A^{1/2})$ and assume that $D(A)$ is dense in $H$. We further assume that there exists a $\sigma > 0$ such that $e^{-\sigma t} \|f(t)\|$ is bounded for $t \geq 0$. The sub-diffusion equation has been used numerous times as a model in natural sciences to describe anomalous diffusion processes, see e.g.,  \cite{Nigmatullin1986,Hatano,Kilbas_book,Metzler,RomanAlemany}. As a simple example of the above setting we can take $H = L^2(\Omega)$, $V = H_0^1(\Omega)$, and $A\cdot = -\Delta \cdot$ the Dirichlet Laplacian. Here $\Omega$ denotes a bounded Lipshitz domain and $H_0^1(\Omega)$ the space of $H^1(\Omega)$ functions with vanishing boundary trace. The setting also applies to the corresponding symmetric conforming and non-conforming Galerkin discretizations.

The fractional derivative $\dt^\beta$ is the Caputo fractional derivative $^C_0\dt^\beta$ of order $\beta \in (0,1)$ given by
\begin{equation}
  \label{eq:frac_diff}
  \dt^\beta g(t) = \frac1{\Gamma(1-\beta)}\int_0^t (t-\tau)^{-\beta} g'(\tau) d\tau.
\end{equation}
Note that
\[
  \dt^\beta g(t) = \dt^{\beta-1} g'(t),
\]
where $\dt^{\beta-1}$ is the Riemann-Liouville integral of order $1-\beta$
\begin{equation}
  \label{eq:frac_int}
  \dt^{\beta-1} h(t) = I_t^{1-\beta} h(t) = \frac1{\Gamma(1-\beta)}\int_0^t (t-\tau)^{-\beta} h(\tau) d\tau.
\end{equation}
A physical derivation of fractional derivatives via discrete random walks is described in \cite{GMMP_randwalk}.

 In the case $f \equiv 0$,  semigroup techniques  are used in \cite[Theorem~4.1]{Baz98} to  show that the unique solution  is analytic as a function of $t$ in an open sector around the positive real axis and bounded there as
 \begin{equation}
\label{eq:baz_bound}
\|A\dt^\beta u\| \leq C|t|^{-\beta}{\|u^0\|}.
 \end{equation}
For non-zero $f$ a representation formula for $u$ is given in  \eqref{eq:representation_formula}; see Lemma~\ref{thm:stab_linf}.

Two popular discretization schemes for fractional derivatives and the sub-diffusion equation \eqref{eq:sub_diff} are convolution quadrature \cite{Lub86_frac,Lub88I} and the L1 scheme \cite{LinXu,SunWu}, both usually using a uniform time-step.  The bound \eqref{eq:baz_bound} indicates that the solution is non-smooth at $t = 0$. Indeed unless a correction term to the standard schemes using uniform time-steps is added \cite{Jin2017,Yan2018} only low order convergence can be expected even in the case of a smooth $f$. Furthermore, if $f$ is not smooth for $t > 0$, similar singularities can also occur elsewhere; see Section~\ref{sec:nonsmooth}.


It is important to be able to use non-uniform time-steps and to locate the regions requiring refinement by the use of reliable a posteriori error estimators.  Such estimators are not available for fractional time dependent problems. One of the reasons is the quite involved nature of time dicretisations of fractional derivatives, and consequently the difficulty to connect the numerical schemes to the exact evolution equation. 
In this paper we are able to derive a posteriori error estimates for low order time discrete schemes.  In previous works for time-dependent partial differential equations of parabolic or hyperbolic nature  \emph{reconstruction operators}  are 
introduced to recover 
continuous objects from the approximate solutions, see the review \cite {MR2404052} and e.g., \cite{NSaV, LM:mc2006,AMN:nm2009,Whiler2010,LM2011,GLMV:siam2016,BKM:c2018,MR3584583,MR3803285}. Then the derivation of the estimates is reduced to 
(i) estimate of the reconstruction error  and (ii) the application of PDE stability estimates.  This requires a \emph{pointwise representation} of the numerical scheme as a perturbed evolution equation. Appropriate 
forms of this kind are  not obvious for fractional equations and require new interpretations of the numerical schemes.  
In this paper we address this problem and we derive new a posteriori error estimates for \eqref{eq:sub_diff} valid for a general time-discretization scheme. We apply it to both the L1 scheme and low order convolution quadrature based on the backward Euler scheme. Extensive numerical experiments indicate the reliability and the optimality of the estimators for these schemes as well as 
their  efficiency as error indicators driving adaptive mesh selection locating singularities of the problem. 


The paper is organised as follows. L1 scheme and convolution quadrature are described in Section~\ref{sec:L1} and Section~\ref{sec:cq}. Both schemes are cast in a similar form that allows direct comparison.  The a posteriori estimates will rely on  stability estimates for the evolution problem \eqref{eq:sub_diff} proved in Section~\ref{sec:apriori}, see Theorem \ref{thm:stab_l2} and Theorem \ref{thm:stab_linf}. We provide detailed proofs of both results controlling the $L^2(H)$ and  $L^\infty (H)$ norms of the solution respectively. It is important for the subsequent a posteriori  analysis to note that both results allow the inclusion of a kernel in the forcing term. 
 In Section~\ref{sec:apost} we cast both  L1 scheme and convolution quadrature methods in a unified formulation and we derive corresponding appropriate pointwise forms and error equations. Then we apply  the 
 stability estimates of Section~\ref{sec:apriori} to readily conclude the main   a posteriori error estimates in $L^2(H)$ and  $L^\infty (H)$ norms, Theorem \ref{thm:apost_l2} and Theorem \ref{thm:apost_linf}.
 Numerical   results are presented in Section~\ref{sec:numerics} for all methods and estimators considered.

\section{Numerical method based on piecewise linear time-discretization}\label{sec:L1}

Let $0 = t_0 < t_1 < \dots < t_N = T$ be a partition of $[0,T]$, $I_n: = (t_n,t_{n+1}]$ and $\tstep_n: = t_{n+1}-t_n$. To construct a numerical scheme, given a sequence of values $U_0,\dots,U_N$ we need to define an approximation of the fractional derivative $\dt^\beta$ on this grid. A standard approach is to apply the fractional derivative to a linear interpolant of this data. This is a standard approach to constructing numerical methods including Fredholm and Volterra integal equations \cite{brunner:book,hackbusch:intbook}. In the case of fractional derivatives this approach is often called the L1 scheme \cite{LinXu,SunWu}. We give the details next.

Denote by $\hat U(t)$ the piecewise linear interpolant defined by
\begin{equation}
  \label{eq:lin_int}
  \hat U(t_n) = U_n, \quad n = 0,\dots,N, \qquad \hat U|_{I_n} \in \mathbb{P}_1(I_n),
\end{equation}
where $\mathbb{P}_1$ is the space of linear functions. We also define a projection operator $\Pi_1$ mapping continuous functions to piecewise linear functions by interpolating in $t_n$. Namely
\begin{equation}
  \label{eq:lin_int_cont}
  \Pi_1 u(t_n) = u(t_n), \quad n = 0,\dots,N, \qquad  \Pi_1 u|_{I_n} \in \mathbb{P}_1(I_n).
\end{equation}
Recalling that $\dt^{\beta} = \dt^{\beta-1} \dt$, we now define the discrete fractional derivative by
\begin{equation}
  \label{eq:L1_der}
  \Ld U(t) := \dt^\beta \hat U(t) = \dt^{\beta-1} \hat U'(t).
\end{equation}
Note that  for $t  \in I_n$
\[
  \begin{split}    
    \Ld U(t)    =& \frac1{\Gamma(1-\beta)}\sum_{j = 0}^{n-1}\int_{t_j}^{t_{j+1}}(t-\tau)^{-\beta}\,d\tau \tfrac{U_{j+1}-U_j}{\tstep_j} \\ &+\frac1{\Gamma(1-\beta)}\int_{t_n}^{t}(t-\tau)^{-\beta}\,d\tau \tfrac{U_{n+1}-U_n}{\tstep_n}\\
=& \sum_{j = 0}^{n} \omega_j(t)\tfrac{U_{j+1}-U_j}{\tstep_j},
      \end{split}
    \]
where
\begin{equation}
  \label{eq:omegat}
  \begin{split}    
  \omega_j(t) &= \frac1{\Gamma(1-\beta)}\int_{t_j}^{\min(t,t_{j+1})}(t-\tau)^{-\beta}\,d\tau\\
&= \frac1{\Gamma(2-\beta)} \left((t-t_j)^{1-\beta}-(t-\min(t,t_{j+1}))^{1-\beta}\right)
\end{split}
\end{equation}
for $t \geq t_j$ and
\begin{equation}
  \label{eq:omegat1}
\omega_j(t) = 0 \text{ for } t \leq t_j.
\end{equation}
This implies that $\Ld U(t)$ is continuous as
\[
  \begin{split}    
\Ld U(t_n^+) &= \sum_{j = 0}^{n-1} \omega_j(t_n)\tfrac{U_{j+1}-U_j}{\tstep_j} +\omega_n(t_n)\tfrac{U_{n+1}-U_n}{\tstep_n} \\&= \sum_{j = 0}^{n-1} \omega_j(t_n)\tfrac{U_{j+1}-U_j}{\tstep_j}= \Ld U(t_{n}^-).
  \end{split}
\]

We denote the evaluation of the weights $\omega_j$ at $t_{n+1}$  by $\omega_{n,j}$:
\begin{equation}\label{eq:omegan}
 \begin{split}        
      \omega_{n,j} &:= \omega_j(t_{n+1})\\
&= \frac1{\Gamma(2-\beta)} \left((t_{n+1}-t_j)^{1-\beta}-(t_{n+1}-t_{j+1})^{1-\beta}\right),
      \end{split}
    \end{equation}
for $n \geq j$. By definition $\omega_{n,j} = 0$ for $n < j$. With all the notation introduced the discrete fractional derivative \eqref{eq:L1_der} evaluated at $t_{n+1}$ is given by
\begin{equation}   \label{eq:L1_der1}
\Ld U(t_{n+1}) = \sum_{j = 0}^n  \omega_{n,j} \frac{U_{j+1}-U_j}{\tstep_j}.
\end{equation}

For later it is also useful to define
\[
  a_j(t) = \frac1{\Gamma(2-\beta)} (t-t_j)^{1-\beta}, \quad t \geq t_j,
\]
and $a_j(t) = 0$ for $t \leq t_j$. Using this definition we have for $t \leq t_{n+1}$
\begin{align}
    \Ld U(t)   =& \sum_{j = 0}^{n} \omega_j(t)\tfrac{U_{j+1}-U_j}{\tstep_j}\nonumber\\
=& \sum_{j = 0}^{n} (a_{j}(t)-a_{j+1}(t))\tfrac{U_{j+1}-U_j}{\tstep_j}\nonumber\\
=& a_0(t)\tfrac{U_{1}-U_0}{\tstep_0}+\sum_{j = 1}^na_j(t)\left(\tfrac{U_{j+1}-U_j}{\tstep_j}-\tfrac{U_{j}-U_{j-1}}{\tstep_{j-1}}\right).\label{eq:upp_repr}
  \end{align}



We can now write down the fully discrete system: Find $U_{n+1} \in H$, $n = 0,1,\dots,N-1$ such that
    \begin{equation}
      \label{eq:discrete_sys}
      \Ld U(t_{n+1})+AU_{n+1} = f_{n+1},
    \end{equation}
    where $f_{n+1} = f(t_{n+1})$ and $U_0 = u^0$ or some approximation of the initial data. Alternatively, recalling the definition of $\omega_{n,j}$ \eqref{eq:omegan}  we can rewrite the system  in a more familiar form as a finite difference formula
    \begin{equation}
      \label{eq:discrete_sys1}
\sum_{j = 0}^{n} \omega_{n,j} \frac1{\tstep_j}(U_{j+1}-U_j) + AU_{n+1} = f_{n+1}, \qquad n = 0,\dots, N-1.
\end{equation}
This can be seen as a classical collocation scheme for Volterra integral equations \cite{brunner:book,hackbusch:intbook} and is equivalent to the L1 scheme of \cite{SunWu} and \cite{LinXu}. A fast and memory efficient implementation of the solution of such a discretization  is developed in \cite{LoLuSch08}.

We summarize available a-priori results. {Most of these results are for uniform time-step $\tstep_j = \tstep$ for all $j$.} Convergence order of $O(\tstep^{2-\beta})$ is proved in \cite{SunWu} under the assumption that $u \in C^2[0,T]$. In \cite{BangtiLZ:2016} the authors  argue that in general even for smooth  data this smoothness of the solution does not hold and only linear $O(\tstep)$ convergence order is obtained. Namely, they prove that for $f \equiv 0$
\[
  \|u(t_n)-U_n\| \leq C\tstep t_n^{-1} \|u^0\|, \qquad n \geq 1
\] 
 and 
\[
\|u(t_n)-U_n\| \leq C\tstep t_n^{\beta-1} \|Au^0\|, \qquad n \geq 1,
\]
{if $u^0 \in V$}. 
The main reason for this is a singularity at $t = 0$, as  even for $f \equiv 0$, in general $\|\dt^\beta u(t)\| \leq C t^{-\beta}\|u^0\|$ \cite{SakYam}. This suggests that grading towards $t = 0$ would be advantageous. Indeed in \cite{StynesOG} it is proved that the optimal convergence is recovered when using a graded mesh, where the operator $A$ is the differential operator $Au = - \frac{\partial^2}{\partial x^2}u+ c(x) u$ in one spatial dimension. The authors prove that for optimal convergence uniformly for $t > 0$ in the case of a smooth $f$, it is necessary to choose the graded mesh 
\begin{equation}
  \label{eq:stynes_graded}
  t_j =  T (j/N)^k  \text{ with } k \geq \frac{2-\beta}{\beta}, \quad j = 0,\dots,N.
\end{equation}

A modified L1 scheme described in \cite{Yan2018} recovers  the $O(\tstep^{2-\beta})$ convergence order for sufficiently smooth $f$
\[
\|u(t_n) - U_n\| \leq C t_n^{\beta-2} \tstep^{2-\beta},
\]
with $C$ a constant depending on $f$ and $u_0$. The modified scheme reads
    \begin{equation}
      \label{eq:discrete_sys_corr}
      \begin{split}
      \Ld U(t_1)+AU_1 &= f(t_{1})-\frac12(Au_0-f(0)),\\
              \Ld U(t_{n+1})+AU_{n+1} &= f_{n+1}, \qquad n = 1, \dots, N-1.
      \end{split}
    \end{equation}

\section{Convolution quadrature}\label{sec:cq}
Another popular discretization method for fractional derivatives is convolution quadrature 
 \cite{Lub86_frac,Lub88I} with non-uniform time-step schemes investigated in \cite{LopS16}.  The low order convolution quadrature (CQ) based on backward Euler discretization can be given in the following form
 \begin{equation}
   \label{eq:CQ_defn}
[\cqd^\beta V]_{n+1} := \sum_{j = 0}^{n}\cqw_{n,j} \frac{V_{j+1}-V_j}{\tstep_j},   
 \end{equation}
where $\cqw_{n,j}$ are convolution weights for the fractional integral of order $1-\beta$ and is given by
\begin{equation}
  \label{eq:cq_weights}
 \cqw_{n,j}= \tstep_j\frac1{2\pi i} \int_{\sigma-i \infty}^{\sigma+i\infty } z^{\beta-1} \prod_{k = j}^{n} \frac1{1-\tstep_kz} dz, 
\end{equation}
for a fixed $\sigma \in  (0, \min_k \tstep_k^{-1})$; due to the analyticity of the integrand, the value of $\cqw_{n,j}$ is independent of $\sigma$. The expression \eqref{eq:CQ_defn} is of the same form as the L1 discrete derivative \eqref{eq:L1_der1}. As this is not the standard way to present convolution quadrature we give a detailed derivation of the scheme in the appendix. 

\begin{remark}
In order to understand better the formula \eqref{eq:cq_weights} it is of interest to compare  $\cqw_{n,j}$  with the weights $\omega_{n,j}$ of the L1 scheme.  Using the  approximation $(1-z)^{-1} = e^z+O(z^2)$ we have
\[
  \begin{split}    
 \cqw_{n,j} &= \tstep_j\frac1{2\pi i} \int_{\sigma-i \infty}^{\sigma+i\infty } z^{\beta-1} \prod_{k = j}^{n} \frac1{1-\tstep_kz} dz\\
&\approx \tstep_j\frac1{2\pi i} \int_{\sigma-i \infty}^{\sigma+i\infty } z^{\beta-1} e^{\sum_{k = j}^{n} \tstep_kz} dz\\
&= \tstep_j\frac1{2\pi i} \int_{\sigma-i \infty}^{\sigma+i\infty } z^{\beta-1} e^{(t_{n+1}-t_{j+1})z} dz\\
&= \tstep_j \frac1{\Gamma(1-\beta)}(t_{n+1}-t_j)^{-\beta},
  \end{split}
\]
where in the last step we used that the inverse Laplace transform of $z^{\beta-1}$ is $\frac1{\Gamma(1-\beta)} t^{-\beta}$. A rigorous proof in a more general setting and with error estimates is given in  \cite[Theorem~4.1]{Lub88I} for the case of uniform time-steps and in \cite[Proposition~2.2]{lbbook} for non-uniform time-steps.

Returning to the L1 discrete derivative \eqref{eq:omegan}
\[
  \begin{split}    
\omega_{n,j} &= \frac1{\Gamma(2-\beta)} \left((t_{n+1}-t_j)^{1-\beta}-(t_{n+1}-t_{j+1})^{1-\beta}\right)\\
&= \frac1{\Gamma(2-\beta)} \left((t_{n+1}-t_{j+1}+\tstep_j)^{1-\beta}-(t_{n+1}-t_{j+1})^{1-\beta}\right)\\
&= \tstep_j\frac{1}{\Gamma(1-\beta)} (t_{n+1}-t_{j+1})^{-\beta}+O((t_{n+1}-t_{j+1})^{-\beta-1}\tstep_j^2),
  \end{split}
\]
for $t_{n+1}> t_{j+1}$. Hence, the weights $\omega_{n,j}$ and $\cqw_{n,j}$ have a similar behaviour for $n \gg j$. 
\end{remark}

To simplify the computation of $\cqw_{n,j}$ we can transform the integration contour to the negative real axis 
\begin{equation}
  \label{eq:real_formula}
  \begin{split}    
  \cqw_{n,j}&= \tstep_j\frac1{2\pi i} \left[\int_{-\infty}^0e^{-(\beta-1)\pi i} x^{\beta-1} \prod_{k = j}^n \frac1{1+x\tstep_k} dx+\int_0^{\infty}e^{(\beta-1)\pi i} x^{\beta-1} \prod_{k = j}^n \frac1{1+x\tstep_k} dx\right]\\
&= \tstep_j\frac{\sin((1-\beta)\pi)}{\pi} \int_0^{\infty} x^{\beta-1} \prod_{k = j}^n \frac1{1+x\tstep_k} dx.
  \end{split}
\end{equation}
Note that
\[
\cqw_{n,n} = \tstep_n^{1-\beta}\frac{\sin((1-\beta)\pi)}{\pi} \int_0^{\infty} x^{\beta-1} \frac1{1+x} dx = \tstep_n^{1-\beta}.
\]
For $j < n$ we truncate the integral and then apply an $hp$-quadrature as done for the uniform time-step version in \cite{BanLop19}. This approach to computing the weights can be used to develop a computationally and memory efficient algorithm \cite{BanLop19}.

With this definition of the discrete derivative, the time-discrete system is given by
\begin{equation}
  \label{eq:CQ_disc}
[\cqd^{\beta} U ]_{n+1} + AU_{n+1} = f(t_{n+1}), \qquad n = 0,\dots, N-1,  
\end{equation}
or written as
\begin{equation}
  \label{eq:CQ_disc1}
\sum_{j = 0}^n \cqw_{n,j} \frac{U_{j+1}-U_j}{\tstep_j} + AU_{n+1} = f(t_{n+1}), \qquad n = 0,\dots, N-1.  
\end{equation}

For non-uniform steps, convolution quadrature has been analyzed in \cite{LopS16}. This analysis is however for hyperbolic problems with assumptions that are too stringent for the parabolic problem investigated here. For uniform steps \cite[Theorem~2.2]{Lub04} and the estimate $\left\|\left(A+ \lambda^\beta I\right)^{-1}\right\| \leq C |\lambda|^{-\beta}$, see \eqref{eq:resolvent}, imply that
\[
\|U_n-u(t_n)\| \leq C t_n^{\beta-1} \tstep, \qquad n = 1,\dots,N = T/\tstep.
\]
This implies that
\begin{align}  
\left(\tstep \sum_{n = 1}^N \|U_n-u(t_n)\|^2 \right)^{1/2}
&\leq C \tstep \left(\tstep \sum_{n = 1}^N t_n^{2\beta-2} \right)^{1/2}\nonumber\\
&\leq C  {\tstep \max(T^{\beta-1/2},\tstep^{\beta-1/2})}\nonumber\\
&\leq C(T) \tstep^{\min(1,\beta+\frac12)}\label{CQ_conv},
\end{align}
{where in the penultimate step we used an upper bound derived from the equivalent continuous integral; see \cite[Lemma~4.1]{BakerB}.} 
Therefore for $t_n$ away from the origin linear order convergence is obtained, whereas in the $L^2$ norm we expect to have suboptimal convergence order if  $\beta < 1/2$ unless a graded discretization is used.
\begin{remark}\label{rem:cq_l1_conv}
  We notice an important difference between the L1 scheme and CQ. The optimal convergence for the L1 scheme is $O(\tstep^{2-\beta})$, whereas for CQ  the optimal convergence order is linear $O(\tstep)$. For non-uniform time-step schemes we would expect the same to hold with $\tstep = \max_j \tstep_j$.
\end{remark}
\section{Stability of the continuous problem}\label{sec:apriori}

In this section we derive  stability estimates for the evolution problem \eqref{eq:sub_diff} in $L^2(H)$ and  $L^\infty (H), $ Theorem \ref{thm:stab_l2} and Theorem \ref{thm:stab_linf}. These bounds will be then instrumental to
derive the a posteriori estimates in the next section. We provide detailed proofs of both results aiming to   include a kernel in the forcing term. This is crucial in order to obtain an optimal estimator which can recognise the $O(\tstep^{2-\beta})$, i.e., higher than linear, convergence  order achievable by the L1 scheme. 

Important for the analysis in $L^2(H)$ will be the following positivity result. It can be deduced from Lemma~1.7.2 in \cite{siskova:phd}; see also Lemma~3.1 in \cite{EndreZener}. A similar result to the second inequality below can also be found in \cite[Theorem~A.1]{McLean:2012} but with a slightly less optimal constant. We nevertheless believe that the following proof is of interest. It is similar to  the  proof for the discrete stability given in \cite{SunWu}.

\begin{lemma}\label{lem:pos}
  Let $g \in C^1([0,T];H)$ and $\beta \in (0,1)$, where $H$ is a Hilbert space with inner product $\langle \cdot, \cdot \rangle$ and norm $\|\cdot\|$. Then
  \[
    \begin{split}      
    \int_0^T \langle\dt^\beta g(t), g(t)\rangle \, dt \geq& \frac{1/2}{\Gamma(1-\beta)} \int_0^T \left((T-t)^{-\beta}+t^{-\beta}\right)\|g(t)\|^2 \,dt \\&\qquad -\frac{1}{\Gamma(1-\beta)}\int_0^Tt^{-\beta}\langle g(0),g(t)\rangle\,dt\\
\geq& \frac{(T/2)^{-\beta}}{\Gamma(1-\beta)} \int_0^T \|g(t)\|^2 \,dt-\frac{1}{\Gamma(1-\beta)}\int_0^Tt^{-\beta}\langle g(0),g(t)\rangle\,dt.
    \end{split}
    \]
\end{lemma}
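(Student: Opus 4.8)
The plan is to work directly with the double-integral form of the left-hand side and to extract the stated boundary terms by an integration by parts in the convolution variable, the whole difficulty being the singularity of the kernel. Writing out the Caputo derivative, the quantity to bound is
\[
\int_0^T \inner{\dt^\beta g(t)}{g(t)}\,dt = \frac{1}{\Gamma(1-\beta)}\int_0^T\int_0^t (t-\tau)^{-\beta}\inner{g'(\tau)}{g(t)}\,d\tau\,dt,
\]
so I would factor out $1/\Gamma(1-\beta)$ and analyse the inner double integral. The naive idea is to integrate by parts in $\tau$ using $\inner{g'(\tau)}{g(t)} = \frac{d}{d\tau}\inner{g(\tau)}{g(t)}$, but the boundary term at $\tau = t$ diverges since $(t-\tau)^{-\beta}$ blows up there. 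The key device is to replace $\inner{g(\tau)}{g(t)}$ by $\psi(\tau) := \inner{g(\tau)-g(t)}{g(t)}$, which has the same $\tau$-derivative but satisfies $\psi(t)=0$; because $\psi(\tau) = O(t-\tau)$ as $\tau\to t$, the product $(t-\tau)^{-\beta}\psi(\tau)$ now vanishes at the upper endpoint.

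Carrying out this integration by parts, the endpoint at $\tau=0$ produces $-t^{-\beta}\psi(0) = -t^{-\beta}\inner{g(0)}{g(t)} + t^{-\beta}\|g(t)\|^2$, which already yields the cross term of the statement together with a first contribution to the $t^{-\beta}\|g(t)\|^2$ term. The remaining interior term is $-\beta\int_0^t (t-\tau)^{-\beta-1}\psi(\tau)\,d\tau$, whose kernel is no longer integrable on its own but is again tamed by $\psi(\tau)=O(t-\tau)$. Here I would use the polarization identity $\inner{g(\tau)-g(t)}{g(t)} = \tfrac12\|g(\tau)\|^2 - \tfrac12\|g(t)\|^2 - \tfrac12\|g(\tau)-g(t)\|^2$. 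After multiplication by $\beta(t-\tau)^{-\beta-1}$ and integration, the last summand feeds a manifestly nonnegative double integral $\frac{\beta}{2}\int_0^T\int_0^t (t-\tau)^{-\beta-1}\|g(\tau)-g(t)\|^2\,d\tau\,dt \ge 0$, and this is precisely the source of the inequality. The first two summands involve only $\phi(t):=\|g(t)\|^2$; a second integration by parts in $\tau$ (again using $\phi(\tau)-\phi(t)=O(t-\tau)$ to kill the endpoint) reduces them to one-dimensional integrals that I would evaluate by swapping the order of integration in $t$. After the $\|g(0)\|^2$ contributions cancel, this leaves exactly $\tfrac12\int_0^T\big[(T-t)^{-\beta}-t^{-\beta}\big]\|g(t)\|^2\,dt$.

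Assembling the pieces (and restoring the prefactor $1/\Gamma(1-\beta)$), the coefficient of $t^{-\beta}\|g(t)\|^2$ combines to $\tfrac12$, which together with the $\tfrac12(T-t)^{-\beta}\|g(t)\|^2$ term and the cross term gives the first asserted inequality once the nonnegative remainder is dropped. The second inequality is then elementary: the function $t\mapsto (T-t)^{-\beta}+t^{-\beta}$ attains its minimum on $[0,T]$ at $t=T/2$, where it equals $2(T/2)^{-\beta}$, so $\tfrac12\big[(T-t)^{-\beta}+t^{-\beta}\big]\ge (T/2)^{-\beta}$ pointwise. I expect the main obstacle to be purely technical, namely justifying the two integrations by parts and the interchange of the order of integration in the presence of the singular kernels $(t-\tau)^{-\beta}$ and $(t-\tau)^{-\beta-1}$. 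This is resolved throughout by the observation that each singular integrand is paired with a factor vanishing linearly on the diagonal $\tau=t$, so all integrals converge absolutely and Fubini applies; the $C^1$ regularity of $g$ is exactly what guarantees these $O(t-\tau)$ cancellations.
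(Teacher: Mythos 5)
Your proof is correct, and it takes a genuinely different route to handle the kernel singularity than the paper does. The paper regularizes first: it replaces $(t-\tau)^{-\beta}$ by $(t-\tau+\veps)^{-\beta}$ with $\veps>0$, so the integration by parts in $\tau$ and the exact evaluation of the resulting kernel integrals are elementary, the $\veps^{-\beta}$ terms cancel, and the inequality follows by letting $\veps\to 0^+$ (it also reduces to $H=\mathbb{R}$ for notational convenience, whereas you work directly in $H$). You instead keep the singular kernel and defuse the divergent boundary term by the subtraction $\psi(\tau)=\inner{g(\tau)-g(t)}{g(t)}$, whose $O(t-\tau)$ vanishing on the diagonal legitimizes both integrations by parts and Fubini directly; your closing remark correctly identifies this as the crux, and a one-line justification (integrate over $[0,t-\delta]$, note the boundary term is $O(\delta^{1-\beta})$) would make it fully rigorous. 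It is worth observing that the two arguments discard \emph{exactly} the same nonnegative quantity: the paper's Young-inequality step $\inner{g(\tau)}{g(t)}\le \tfrac12\|g(\tau)\|^2+\tfrac12\|g(t)\|^2$ is precisely your polarization identity with the term $\tfrac{\beta}{2}\int_0^T\int_0^t(t-\tau)^{-\beta-1}\|g(\tau)-g(t)\|^2\,d\tau\,dt\ge 0$ dropped, so the resulting constants are identical. What each approach buys: the paper's $\veps$-shift makes every intermediate manipulation trivially valid at the cost of a final limit passage (dominated convergence on the left-hand side), while your version avoids any limiting argument but shifts the technical burden onto the singular integrations by parts, which the $C^1$ hypothesis handles as you say; your computation of the $\phi$-terms (second integration by parts, then swapping the order of integration so the $\|g(0)\|^2$ contributions cancel, leaving $\tfrac12\int_0^T[(T-t)^{-\beta}-t^{-\beta}]\|g(t)\|^2\,dt$) checks out, and the final minimization at $t=T/2$ is the same in both proofs.
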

\begin{proof}
   To simplify notation, we set $H = \mathbb{R}$.  Let $\veps > 0$ and consider
\[
  \begin{split}    
    \int_0^T \int_0^t (t-\tau+\veps)^{-\beta} g'(\tau) g(t) d\tau dt =&
    -\beta\int_0^T \int_0^t (t-\tau+\veps)^{-\beta-1} g(\tau) g(t) d\tau dt\\
    &+ \veps^{-\beta}\int_0^T g^2(t)dt-g(0)\int_0^T(t+\veps)^{-\beta}g(t)dt.
  \end{split}
\]
Next we bound the first term on the right-hand side as follows
\[
\begin{split}
 \beta\int_0^T \int_0^t (t-\tau+\veps)^{-\beta-1} g(\tau) g(t) d\tau dt   \leq& \frac12\beta\int_0^T \int_0^t (t-\tau+\veps)^{-\beta-1} g^2(\tau) d\tau dt\\
        & +\frac12\beta\int_0^T \int_0^t (t-\tau+\veps)^{-\beta-1} g^2(t) d\tau dt\\       
        =& \frac12\beta\int_0^T \int_\tau^T (t-\tau+\veps)^{-\beta-1} dt\, g^2(\tau)  d\tau\\
        &+ \frac12\int_0^T (\veps^{-\beta}-(t+\veps)^{-\beta}) g^2(t) dt\\
 =& -\frac12\int_0^T ((T-\tau+\veps)^{-\beta}-\veps^{-\beta}) g^2(\tau)  d\tau\\
        &+ \frac12\int_0^T (\veps^{-\beta}-(t+\veps)^{-\beta}) g^2(t) dt.
\end{split}
\]
Returning to the original calculation, this inequality implies
\[
\begin{split}
    \int_0^T \int_0^t (t-\tau+\veps)^{-\beta} g'(\tau) g(t) d\tau dt  \geq& \frac12\int_0^T (T-\tau+\veps)^{-\beta} g^2(\tau)  d\tau+\frac12\int_0^T (t+\veps)^{-\beta} g^2(t) dt\\
        &-g(0)\int_0^T(t+\veps)^{-\beta}g(t)dt.
\end{split}
\]
Taking the limit $\veps \rightarrow 0^+$ and including the constant $\frac1{\Gamma(1-\beta)}$ gives the first inequality. Finding that $\min_{\tau \in [0,T]} \frac12((T-\tau)^{-\beta}+\tau^{-\beta}) = (T/2)^{-\beta}$ gives the second.
\end{proof}


\begin{theorem}\label{thm:stab_l2}
  Let $u$ be the solution of \eqref{eq:sub_diff}. Then
  \[
    \begin{split}      
      2^{-1}\int_0^T g_{\beta,T}(t)&\|u(t)\|^2 \,dt+\int_0^T|u(t)|^2_1dt \\ &\leq \int_0^T \langle f(t),u(t)\rangle \,dt+\frac{1}{\Gamma(1-\beta)}\int_0^Tt^{-\beta}\langle u^0,u(t)\rangle\,dt,
\end{split}      
\]
where
\[
g_{\beta,T}(t) = \frac1{\Gamma(1-\beta)}\left((T-t)^{-\beta}+t^{-\beta}\right).
\]
\end{theorem}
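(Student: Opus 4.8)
The plan is to use the energy method, testing the evolution equation against the solution itself. Taking the $H$-inner product of $\dt^\beta u(t) + Au(t) = f(t)$ with $u(t)$ and integrating over $[0,T]$ yields
\[
\int_0^T \inner{\dt^\beta u(t)}{u(t)}\,dt + \int_0^T \inner{Au(t)}{u(t)}\,dt = \int_0^T \inner{f(t)}{u(t)}\,dt.
\]
By the definition $|u|_1^2 = \inner{Au}{u}$, the second term on the left is exactly $\int_0^T |u(t)|_1^2\,dt$, which already supplies the coercive term appearing in the claimed estimate. No work is needed on the right-hand side, since the theorem leaves $\int_0^T \inner{f(t)}{u(t)}\,dt$ unbounded.

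For the remaining term I would invoke the positivity result Lemma~\ref{lem:pos} with $g = u$. The weight $g_{\beta,T}(t) = \frac1{\Gamma(1-\beta)}\bigl((T-t)^{-\beta}+t^{-\beta}\bigr)$ is precisely the one produced there, so the lemma gives
\[
\int_0^T \inner{\dt^\beta u(t)}{u(t)}\,dt \geq \tfrac12\int_0^T g_{\beta,T}(t)\|u(t)\|^2\,dt - \frac1{\Gamma(1-\beta)}\int_0^T t^{-\beta}\inner{u(0)}{u(t)}\,dt.
\]
Substituting $u(0)=u^0$, combining with the energy identity, and moving the initial-data integral to the right-hand side then produces exactly the stated inequality. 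Up to this bookkeeping the argument is immediate once Lemma~\ref{lem:pos} is in hand.

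The main obstacle is regularity. Lemma~\ref{lem:pos} is stated for $g \in C^1([0,T];H)$, whereas the bound \eqref{eq:baz_bound} shows that the solution of \eqref{eq:sub_diff} generically has a time derivative that blows up like $t^{-\beta}$ at the origin, so $u \notin C^1([0,T];H)$ in general. To make the energy computation rigorous I would approximate $u$ by a sequence $u_k \in C^1([0,T];H)$ converging to $u$ in a suitable sense (for instance by mollifying in time or by solving with smoothed data), apply the identity and the lemma to each $u_k$, and then pass to the limit. Since $\beta \in (0,1)$, the singular weights $t^{-\beta}$ and $(T-t)^{-\beta}$ are integrable, and together with the a priori bound \eqref{eq:baz_bound} and the boundedness of $\|u(t)\|$ this guarantees that every integral appearing above converges and that the inequality is preserved in the limit. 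Establishing this limiting passage carefully — in particular controlling the near-$t=0$ contribution of the fractional-derivative term — is the only genuinely delicate point of the proof.
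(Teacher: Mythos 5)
Your proof is correct and takes essentially the same route as the paper, whose entire argument is the one line ``testing \eqref{eq:sub_diff} with $u$ and using Lemma~\ref{lem:pos} gives the result.'' Your closing discussion of the $C^1([0,T];H)$ hypothesis in Lemma~\ref{lem:pos} addresses a regularity point the paper silently glosses over, so it makes the argument more careful rather than different.
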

\begin{proof}
  Testing \eqref{eq:sub_diff} with $u$ and using Lemma~\ref{lem:pos} gives the result.
\end{proof}

In the following corollary we estimate the forcing term by including a kernel dictated by the appearance of  $g_{\beta,T}$ in the lower bound of the estimate in Theorem \ref{thm:stab_l2}. This is in contrast to estimators of the forcing term in the dual of the $V$ norm typically appearing in diffusion problems. The next bound will be instrumental in the a posteriori analysis of the next section. 
\begin{corollary}\label{cor:stab_l2}
  Let $u$ be a solution of \eqref{eq:sub_diff}. Then
\[
  \begin{split}    
\int_0^T \|u(t)\|^2+|u(t)|_1^2\,dt \leq&  C^1_{T,\beta}\int_0^T (g_{\beta,T}(t))^{-1}\|f(t)\|^2 \,dt \\ &+C^2_{T,\beta}\|u^0\|^2
  \end{split}
\]
where $g_{\beta,T}$ as in Theorem~\ref{thm:stab_l2},
\[
C^1_{T,\beta} =   2 \max(2^{1-\beta}\Gamma(1-\beta)T^{\beta},1)
\]
and
\[
C^2_{T,\beta} =   2\frac{T^{1-\beta}}{\Gamma(2-\beta)}\max(2^{1-\beta}\Gamma(1-\beta)T^{\beta},1).
\]
\end{corollary}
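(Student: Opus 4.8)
The plan is to bound the right-hand side of the identity in Theorem~\ref{thm:stab_l2} by a weighted Cauchy--Schwarz and Young argument in which the weight $g_{\beta,T}$ itself plays the role of the pairing, so that every contribution carrying a factor $\|u(t)\|$ appears multiplied by $g_{\beta,T}(t)$ and can be absorbed into the left-hand side. For the forcing term I would split $\|f\|\,\|u\| = \big(g_{\beta,T}^{-1/2}\|f\|\big)\big(g_{\beta,T}^{1/2}\|u\|\big)$ and apply Young's inequality with parameter $\delta>0$,
\[
\langle f(t),u(t)\rangle \leq \|f(t)\|\,\|u(t)\| \leq \frac{1}{2\delta}\, g_{\beta,T}(t)^{-1}\|f(t)\|^2 + \frac{\delta}{2}\, g_{\beta,T}(t)\|u(t)\|^2 .
\]

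For the initial-data term I would use the elementary bound $\Gamma(1-\beta)^{-1}t^{-\beta}\leq g_{\beta,T}(t)$, obtained by discarding the nonnegative summand $(T-t)^{-\beta}$ in $g_{\beta,T}$. Writing the integrand as a product of $\big(\Gamma(1-\beta)^{-1}t^{-\beta}\big)^{1/2}\|u^0\|$ and $\big(\Gamma(1-\beta)^{-1}t^{-\beta}\big)^{1/2}\|u(t)\|$ and applying Young's inequality with parameter $\mu>0$, then bounding the weight of the second factor by $g_{\beta,T}(t)$, gives
\[
\frac{t^{-\beta}}{\Gamma(1-\beta)}\langle u^0,u(t)\rangle \leq \frac{1}{2\mu}\,\frac{t^{-\beta}}{\Gamma(1-\beta)}\|u^0\|^2 + \frac{\mu}{2}\, g_{\beta,T}(t)\|u(t)\|^2 .
\]
Integrating the first summand uses $\int_0^T t^{-\beta}\,dt = T^{1-\beta}/(1-\beta)$ together with $\Gamma(2-\beta) = (1-\beta)\Gamma(1-\beta)$, producing exactly the factor $T^{1-\beta}/\Gamma(2-\beta)$ appearing in $C^2_{T,\beta}$.

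Collecting these two bounds in Theorem~\ref{thm:stab_l2} and choosing $\delta=\mu=\tfrac14$, the total coefficient of $\int_0^T g_{\beta,T}\|u\|^2$ on the right is $\tfrac{\delta+\mu}{2}=\tfrac14$, strictly less than the coefficient $\tfrac12$ carried on the left, so absorption leaves
\[
\frac14\int_0^T g_{\beta,T}(t)\|u(t)\|^2\,dt + \int_0^T |u(t)|_1^2\,dt \leq 2\int_0^T g_{\beta,T}(t)^{-1}\|f(t)\|^2\,dt + 2\,\frac{T^{1-\beta}}{\Gamma(2-\beta)}\|u^0\|^2 .
\]
Finally I would convert the weighted $\|u\|$-term into the unweighted $L^2(H)$ norm by the lower bound $g_{\beta,T}(t)\geq 2(T/2)^{-\beta}/\Gamma(1-\beta)$ established inside the proof of Lemma~\ref{lem:pos}, which yields $\int_0^T\|u\|^2\,dt \leq 2^{1-\beta}\Gamma(1-\beta)T^{\beta}\cdot\tfrac14\int_0^T g_{\beta,T}\|u\|^2\,dt$. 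Setting $M = \max\!\big(2^{1-\beta}\Gamma(1-\beta)T^{\beta},1\big)$ and using $M\geq 1$ to dominate the $|u|_1^2$ term by itself, both left-hand contributions are bounded by $M$ times the absorbed left-hand side, giving the stated constants $C^1_{T,\beta}=2M$ and $C^2_{T,\beta}=2M\,T^{1-\beta}/\Gamma(2-\beta)$.

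The argument is essentially bookkeeping once $g_{\beta,T}$ is recognized as the correct pairing weight, and there is no deep obstacle. The only point requiring care is coordinating the two Young parameters with the final $\max$-structure of the constants: one must keep the $g_{\beta,T}\|u\|^2$ contribution strictly absorbable while tracking the exact powers of $2$, of $\Gamma(1-\beta)$ and of $T$, so that the lower bound on $g_{\beta,T}$ from Lemma~\ref{lem:pos} reproduces $C^1_{T,\beta}$ and $C^2_{T,\beta}$ verbatim rather than merely up to an unspecified constant.
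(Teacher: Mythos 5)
Your proposal is correct and follows essentially the same route as the paper's proof: weighted Young's inequality applied to both right-hand terms of Theorem~\ref{thm:stab_l2} with parameters $\epsilon_1=\epsilon_2=1/4$ (your $\delta=\mu$), the bound $\int_0^T t^{-\beta}\,dt = T^{1-\beta}/(1-\beta)$ giving $T^{1-\beta}/\Gamma(2-\beta)$, absorption into the left-hand side, and finally the lower bound $g_{\beta,T}(t)\geq \frac{2}{\Gamma(1-\beta)}(T/2)^{-\beta}$ together with the $\max$ to produce exactly $C^1_{T,\beta}$ and $C^2_{T,\beta}$. The only (immaterial) difference is in the initial-data term, where you apply Young with the weight $t^{-\beta}/\Gamma(1-\beta)$ and then dominate it by $g_{\beta,T}$, whereas the paper applies Young directly with the weight $g_{\beta,T}$ and afterwards bounds the resulting kernel $t^{-2\beta}\bigl((T-t)^{-\beta}+t^{-\beta}\bigr)^{-1}$ by $t^{-\beta}$ --- algebraically the same estimate.
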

\begin{proof}
Using the Cauchy-Schwarz and Young's inequalities in Theorem~\ref{thm:stab_l2} gives
\[
  \begin{split}    
2^{-1}(1-\epsilon_1-\epsilon_2)\int_0^T g_{\beta,T}(t) \|u(t)\|^2 \,dt &+\int_0^T|u(t)|_1\,dt \\
\leq& \frac12\epsilon_1^{-1}\int_0^T (g_{\beta,T}(t))^{-1}\|f(t)\|^2 \,dt \\ &+\frac12\epsilon_2^{-1}\frac1{\Gamma(1-\beta)}\int_0^T\frac1{t^{2\beta}\left((T-t)^{-\beta}+t^{-\beta}\right)}dt\|u^0\|^2\\
\leq& \frac12\epsilon_1^{-1}\int_0^T (g_{\beta,T}(t))^{-1}\|f(t)\|^2 \,dt +\frac12\epsilon_2^{-1}\frac{T^{1-\beta}}{\Gamma(2-\beta)}\|u^0\|^2.
      \end{split}
\]
Using $g_{\beta,T}(t) = \frac1{\Gamma(1-\beta)} \left((T-t)^{-\beta}+t^{-\beta}\right) \geq \frac2{\Gamma(1-\beta)}(T/2)^{-\beta}$ we have
\[
  \begin{split}    
\frac{(1-\epsilon_1-\epsilon_2)(T/2)^{-\beta}}{\Gamma(1-\beta)}\int_0^T \|u(t)\|^2 \,dt+\int_0^T|u(t)|_1\,dt \leq& \frac12\epsilon_1^{-1}\int_0^T (g_{\beta,T}(t))^{-1}\|f(t)\|^2 \,dt \\ &+\frac12\epsilon_2^{-1}\frac{T^{1-\beta}}{\Gamma(2-\beta)}\|u^0\|^2.
  \end{split}
\]
Setting $\epsilon_1 = \epsilon_2 = 1/4$ gives the result.
\end{proof}

Our next task is the   $L^\infty (H) $  stability and the proof of   Theorem \ref{thm:stab_linf} below. Towards this goal,  an  Abelian-Tauberian theorem with the names of Hardy and  Littlewood, and Karamata associated with it \cite[p.\ 445]{feller}, connects the asymptotic behaviour of a function $k(t)$ at $t = 0$ with the behaviour of its transform $K(s)$ at infinity. The more modest aim here is to give a bound on $k$ with an explicit constant.

\begin{lemma}\label{lem:kernel_bound}
Assume that  $K(s)$ is an analytic function  such that
\[
|K(s)| \leq C_K |s|^{-\mu} \qquad |\arg s| < \pi -\varphi,
\]
for some $\mu > 0$ and  $\varphi \in [0,\pi/2)$. Then  $k$, the inverse Laplace transform of $K$, is analytic for $t > 0$ and
\[
|k(t)| \leq C_KC_{t,\mu,\varphi}t^{\mu-1},
\]
where 
\[
  \begin{split}    
C_{t,\mu,\varphi} = \min_{r_0 \geq 0} \left(e^{r_0t} (r_0t)^{1-\mu}+\frac1{\pi}(\cos \varphi)^{\mu-1} \Gamma(1-\mu,r_0\cos \varphi)\right).
  \end{split}
\]

For $\mu \in (0,1)$, choosing $r_0 = 0$,  we have the explicit bound
\[
C_{t,\mu,\varphi} \leq \frac1{\pi}(\cos \varphi)^{\mu-1} \Gamma(1-\mu),
\]
whereas for $\mu = 1$, choosing $r_0 = t^{-1}$,
\[
C_{t,1,\varphi} \leq e+\frac1{\pi}E_1(t^{-1}\cos\varphi).
\]
\end{lemma}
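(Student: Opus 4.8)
The plan is to recover $k(t)$ via the Bromwich inversion integral and then deform the contour to exploit the decay hypothesis on $K$. Since $K$ is analytic and decays like $|s|^{-\mu}$ in the sector $|\arg s| < \pi - \varphi$, the inverse Laplace transform is
\[
k(t) = \frac{1}{2\pi \mi} \int_{\gamma - \mi \infty}^{\gamma + \mi \infty} e^{st} K(s)\, ds,
\]
and I would first argue that this integral is well-defined and, by analyticity together with the decay bound, that the contour may be deformed off the vertical line onto a more convenient path parametrised by the boundary of the sector. A standard choice is a Hankel-type contour consisting of two rays $\arg s = \pm(\pi - \varphi)$ emanating from the origin, possibly joined by a small circular arc; the decay $|K(s)| \le C_K |s|^{-\mu}$ ensures the integral converges at infinity for $\mu > 0$, and the estimate $|e^{st}| = e^{t \Re s}$ with $\Re s = |s| \cos(\pi - \varphi) = -|s|\cos\varphi < 0$ on the rays gives exponential damping.

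Next I would split the deformed contour into two pieces to isolate the two terms appearing in $C_{t,\mu,\varphi}$. The structure of the claimed bound (a term $e^{r_0 t}(r_0 t)^{1-\mu}$ plus an incomplete-gamma term $\Gamma(1-\mu, r_0 \cos\varphi)$) strongly suggests a contour of the form: a vertical or circular segment at radius $r_0$ (contributing the first, exponentially-growing-but-controlled term) together with the two rays from radius $r_0$ out to infinity (contributing the incomplete-gamma integral). On the rays, substituting $s = \rho\, e^{\pm \mi(\pi-\varphi)}$ gives $|ds| = d\rho$, $|e^{st}| = e^{-\rho t \cos\varphi}$, and $|K(s)| \le C_K \rho^{-\mu}$, so the ray contribution is bounded by
\[
\frac{C_K}{\pi} \int_{r_0}^\infty \rho^{-\mu} e^{-\rho t \cos\varphi}\, d\rho,
\]
and the change of variable $x = \rho t \cos\varphi$ turns this into $\frac{C_K}{\pi}(t\cos\varphi)^{\mu-1} t^{-\mu}\Gamma(1-\mu, r_0 t\cos\varphi)$, matching the claimed $t^{\mu-1}$ scaling and the incomplete-gamma factor. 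The arc at radius $r_0$ contributes the term controlled by $e^{r_0 t}(r_0 t)^{1-\mu}$ after bounding its length by $2\pi r_0$ (or $\le 2\pi$ times the radius) and using $|K| \le C_K r_0^{-\mu}$ and $|e^{st}| \le e^{r_0 t}$. Optimising over the free parameter $r_0 \ge 0$ then yields the stated minimum.

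The main obstacle, and the point requiring genuine care, is the justification of the contour deformation and the precise bookkeeping of constants so that the arc term comes out exactly as $e^{r_0 t}(r_0 t)^{1-\mu}$ rather than a constant multiple of it. I would need to verify that the contributions from the closing arcs at infinity vanish (a Jordan-lemma-type argument using $\mu > 0$ to kill the decay-versus-growth balance) and that no singularities are crossed in moving the Bromwich line into the sector, which is immediate from the stated analyticity of $K$ throughout $|\arg s| < \pi - \varphi$. Once the general bound is established, the two special cases follow by direct substitution: for $\mu \in (0,1)$ taking $r_0 = 0$ collapses the arc term to zero (since $(r_0 t)^{1-\mu} \to 0$) and leaves $\Gamma(1-\mu, 0) = \Gamma(1-\mu)$, while for $\mu = 1$ the choice $r_0 = t^{-1}$ makes the arc term equal $e^1 \cdot (t^{-1} t)^{0} = e$ and identifies $\Gamma(0, \cos\varphi) = E_1(t^{-1}\cos\varphi)$ — here I must be attentive that at $\mu = 1$ the exponent $1-\mu = 0$ removes the algebraic factor and the incomplete gamma function becomes the exponential integral $E_1$, so the boundary case needs the $r_0 = t^{-1}$ choice precisely because $r_0 = 0$ would give a divergent $\Gamma(0,0)$.
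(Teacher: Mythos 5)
Your proposal follows essentially the same route as the paper's proof: Bromwich inversion deformed onto the two rays $\arg s = \pm(\pi-\varphi)$ starting at radius $r_0$ and joined by the circular arc, with the arc contributing $C_K e^{r_0 t} r_0^{1-\mu} = C_K t^{\mu-1}e^{r_0 t}(r_0 t)^{1-\mu}$, the rays contributing the incomplete-gamma term, minimization over $r_0$, and the identical choices $r_0 = 0$ (for $\mu \in (0,1)$) and $r_0 = t^{-1}$ (for $\mu = 1$) in the special cases. The only blemishes are bookkeeping slips: your ray bound picks up a spurious factor $t^{-\mu}$, and your (correct) substitution $x = \rho t \cos\varphi$ produces $\Gamma(1-\mu, r_0 t\cos\varphi)$ — hence $E_1(\cos\varphi)$ rather than $E_1(t^{-1}\cos\varphi)$ at $\mu = 1$, $r_0 = t^{-1}$ — which does not match the lemma's displayed $\Gamma(1-\mu, r_0\cos\varphi)$, but this discrepancy mirrors a scaling inconsistency already present in the paper's own change of variables, so it should not be held against the approach.
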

\begin{proof}
We begin by using the inverse Laplace transform
\[
k(t) = \frac1{2\pi i}\int_\Gamma e^{st}K(s)ds 
\]
to represent $k$. Here $\Gamma$ consists of the two arcs $\Gamma_{-}\colon s = r e^{(-\pi+\varphi)i}$ and $\Gamma_+\colon s = r e^{(\pi-\varphi)i}$, for $r \in [r_0,\infty)$ and the circular contour connecting these: $\Gamma_0 = \{ r_0e^{i\varphi} : \varphi \in (-\pi+\varphi,\pi-\varphi)\}$ where $r_0 \in (0,1]$.   Starting with $\Gamma_0$ we have
\[
\left|\frac1{2\pi i}\int_{\Gamma_0}e^{st}K(s)ds\right| \leq C_Ke^{r_0t}r_0^{1-\mu}.
\]
The bound on $\Gamma_+ \cup \Gamma_-$ can be done at the same time
\[
  \begin{split}    
\left|    \frac1{2\pi i}\int_{\Gamma_-\cup \Gamma_+}e^{st}K(s)ds\right| &\leq \frac{C_K}{\pi} \int_{r_0}^\infty r^{-\mu} e^{-rt \cos \varphi}dr\\
&= \frac{C_K}{\pi}t^{\mu-1}(\cos \varphi)^{\mu-1} \int_{r_0 \cos \varphi}^\infty r^{-\mu} e^{-r }dr\\
&= \frac{C_K}{\pi}t^{\mu-1}(\cos \varphi)^{\mu-1} \Gamma(1-\mu,r_0\cos \varphi),
  \end{split}
\]
where $\Gamma(\cdot,\cdot)$ is the incomplete Gamma function. For $\mu \in (0,1)$ we set $r_0 = 0$ 
\[
|k(t)| \leq \frac{C_K}{\pi}t^{\mu-1}(\cos \varphi)^{\mu-1} \Gamma(1-\mu,0)
= \frac{C_K}{\pi}t^{\mu-1}(\cos \varphi)^{\mu-1} \Gamma(1-\mu).
\]
For $\mu = 1$, we choose $r_0 = t^{-1}$. Using 
$
\Gamma(0, r) = E_1(r)
$
\cite[(6.11.1)]{NIST}, where $E_1(\cdot)$ is the exponential integral \cite[(6.2.1)]{NIST}, we obtain that
\[
|k(t)| \leq C_K (e+\frac1{\pi}E_1(t^{-1}\cos\varphi)).
\]
\end{proof}

\begin{remark}
  Note that $E_1(t^{-1}\cos \varphi)$ grows as $O(\log t^{-1})$ for $t \rightarrow \infty$.
\end{remark}

Combining the above lemma with a resolvent bound for $A$ will give another stability bound. {As $A$ is a positive definite, self-adjoint operator its spectrum lies on the positive real axis, and hence from \cite[V (3.16)]{Kato} we have} 
 the  resolvent bound 
\begin{equation}
  \label{eq:resolvent}
  \|(A+\lambda I)^{-1}\| {= \frac1{\operatorname{dist}(\lambda,\sigma(A))}} \leq \frac{1}{\sin \theta |\lambda|} \quad \text{ for } |\arg \lambda| < \pi-\theta
\end{equation}
and any fixed $\theta \in (0,\pi/2)$.

\begin{theorem}\label{thm:stab_linf}
Let $u$ be the solution of \eqref{eq:sub_diff} with the additional assumption $u^0 \in D(A)$. Then the bound
\[
\|u(t)\| \leq \|u^0\|+\frac1{\sin \theta} C_{\beta,\varphi}\int_0^t(t-\tau)^{\beta-1}\|f(\tau)-Au^0\|d\tau
\]
holds for $t > 0$, where $\varphi = \max(0,\pi-(\pi-\theta)/\beta)$ and
\[
C_{\beta,\varphi} =  \frac1{\pi}(\cos \varphi)^{\beta-1} \Gamma(1-\beta).
\]
\end{theorem}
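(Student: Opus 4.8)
The plan is to derive an explicit solution representation by Laplace transform and then estimate the resulting convolution kernel via Lemma~\ref{lem:kernel_bound}.

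First I would reduce to homogeneous initial data. Since $u^0 \in D(A)$, the element $Au^0 \in H$ is well defined; setting $w = u - u^0$ and using that the Caputo derivative annihilates constants, so $\dt^\beta w = \dt^\beta u$, the function $w$ solves $\dt^\beta w + Aw = f - Au^0$ with $w(0) = 0$. Taking the Laplace transform and using that for vanishing initial data $\mathcal{L}[\dt^\beta w](s) = s^\beta \hat w(s)$, I obtain $(s^\beta I + A)\hat w(s) = \widehat{(f - Au^0)}(s)$, hence $\hat w(s) = (s^\beta I + A)^{-1}\widehat{(f-Au^0)}(s)$. Inverting the transform yields the representation formula $u(t) = u^0 + \int_0^t E(t-\tau)\bigl(f(\tau) - Au^0\bigr)\,d\tau$, where $E$ is the operator-valued inverse Laplace transform of $(s^\beta I + A)^{-1}$; the convolution integral converges because $f(\tau) - Au^0$ is locally bounded in $H$ (from the assumption on $f$) and, as shown next, the kernel is integrable.

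The crux is to bound $\|E(t)\|$. Using the spectral decomposition of the self-adjoint operator $A$, one has $\|E(t)\| = \sup_{\lambda \in \sigma(A)} |k_\lambda(t)|$, where $k_\lambda$ is the scalar inverse Laplace transform of $K_\lambda(s) = (s^\beta + \lambda)^{-1}$, so it suffices to bound $|k_\lambda(t)|$ uniformly in $\lambda \in \sigma(A) \subset (0,\infty)$. The substitution $\lambda \mapsto s^\beta$ turns the resolvent bound \eqref{eq:resolvent} into $|K_\lambda(s)| \leq \frac1{\sin\theta}|s|^{-\beta}$, valid wherever $|\arg(s^\beta)| = \beta|\arg s| < \pi - \theta$, that is for $|\arg s| < (\pi-\theta)/\beta$. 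Since $K_\lambda$ is in any case analytic and bounded by $\frac1{\sin\theta}|s|^{-\beta}$ throughout the full slit plane $|\arg s| < \pi$, the effective half-opening of the admissible sector is $\min(\pi,(\pi-\theta)/\beta) = \pi - \varphi$ with $\varphi = \max(0,\pi - (\pi-\theta)/\beta)$, exactly the angle in the statement. Applying Lemma~\ref{lem:kernel_bound} with $\mu = \beta \in (0,1)$, $C_K = 1/\sin\theta$ and the choice $r_0 = 0$ then gives $|k_\lambda(t)| \leq \frac1{\sin\theta}C_{\beta,\varphi}t^{\beta-1}$ uniformly in $\lambda$, whence $\|E(t)\| \leq \frac1{\sin\theta}C_{\beta,\varphi}t^{\beta-1}$.

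Finally I would combine the two steps: taking norms in the representation formula and inserting the kernel bound gives $\|u(t)\| \leq \|u^0\| + \frac1{\sin\theta}C_{\beta,\varphi}\int_0^t (t-\tau)^{\beta-1}\|f(\tau) - Au^0\|\,d\tau$, which is the claimed estimate; integrability of $(t-\tau)^{\beta-1}$ is guaranteed by $\beta - 1 > -1$. I expect the main obstacle to be the angle bookkeeping in the third step — correctly tracking how the sector $|\arg\lambda| < \pi-\theta$ of the resolvent bound maps under $\lambda = s^\beta$ and reconciling it with the full analyticity sector so as to produce precisely $\varphi = \max(0,\pi-(\pi-\theta)/\beta)$ — together with the rigorous justification of the Laplace inversion and of the reduction to scalar kernels, which are routine but rely on the local boundedness of $f$ and the spectral theorem for $A$.
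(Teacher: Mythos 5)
Your proof is correct and follows essentially the same route as the paper: reduction to $w = u - u^0$, the Laplace-transform representation $u(t) = u^0 + \int_0^t k_1(t-\tau)\bigl(f(\tau)-Au^0\bigr)\,d\tau$ with $K_1(s) = (s^\beta + A)^{-1}$, the resolvent bound \eqref{eq:resolvent} giving $\|K_1(s)\| \leq \frac{1}{\sin\theta}|s|^{-\beta}$ for $|\arg s| < (\pi-\theta)/\beta$, and Lemma~\ref{lem:kernel_bound} with $\mu = \beta$, $C_K = 1/\sin\theta$, $r_0 = 0$. The only differences are cosmetic: you justify applying the scalar lemma to the operator-valued kernel via the spectral theorem, where the paper applies it directly (its proof carries over verbatim with $\|\cdot\|$ in place of $|\cdot|$), and your aside that the bound holds on the full slit plane ``in any case'' is not true in general --- but it is only actually invoked when $(\pi-\theta)/\beta \geq \pi$, where it does hold, so your angle $\varphi = \max(0,\pi-(\pi-\theta)/\beta)$ comes out correctly.
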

\begin{proof}
Writing $w = u-u^0$ \eqref{eq:sub_diff} becomes
\[
\dt^{\beta}w+Aw = f-Au^0.
\]
After taking the Laplace transform  we obtain that
\[
\hat w(s) = (s^\beta{+}A)^{-1}\left(\hat f(s)-\frac1{s} Au_0\right),
\]
where $\hat w$ and $\hat f$ are the Laplace transforms of $w$ and $f$; note that the growth condition on $f$ ensures the existence of the Laplace transform.

Thus, we have  the representation 
\begin{equation}
  \label{eq:representation_formula}  
u(t) = u^0+\int_0^t k_1(t-\tau) (f(\tau)-Au^0)d\tau,
\end{equation}
with
\[
K_1(s) = (s^\beta{+}A)^{-1}.
\]
The resolvent bound \eqref{eq:resolvent} {implies
\[
\|K_1(s)\| \leq \frac1{\sin \theta |s|^\beta},
\]
for $|\arg s^\beta| < \pi -\theta$ or equivalently for $|\arg s| < (\pi -\theta)/\beta$. Combining this with}
Lemma~\ref{lem:kernel_bound} and $\varphi = \max(0,\pi-(\pi-\theta)/\beta)$ gives that
\[
\|u(t)\|
\leq \|u^0\| +\frac1{\sin \theta} C_{\beta,\varphi}\int_0^t(t-\tau)^{\beta-1}\|f(\tau)-Au^0\|d\tau.
\]
\end{proof}

\section{A posteriori error analysis: piecewise linear reconstruction}\label{sec:apost}

Recall that $\hat U$ denotes the  continuous piecewise linear interpolant of the data $U_0, U_1, \dots, U_N$.  Due to \eqref{eq:discrete_sys}, \eqref{eq:CQ_disc} both schemes can be written in the form
 \begin{equation}
      \label{pw_1}
      Q^\beta \hat U(t_{n+1})+A U_{n+1} =   Q^\beta \hat U(t_{n+1})+A \hat U(t_{n+1}) = f(t_{n+1}),
    \end{equation}
  where for  $ n = 0,\dots, N-1,$
  \begin{equation}
      \label{pw_2}
Q^\beta \hat U(t_{n+1}) = \sum_{j = 0}^{n} \int_{I_j} K_{n,j} \hat U ' (\tau) d\tau  \quad \text{where }   K_{n,j} = \begin{cases}   \omega_{n,j} / {\tstep_j}&\quad \text {for L1 scheme},\\
\cqw_{n,j} / {\tstep_j} &\quad \text {for C-Q scheme.}  \end{cases}
\end{equation}
Furthermore, for consistency reasons we set $Q^\beta \hat U(0)=0. $ 
We need a piecewise equation to be valid for all $t.$ In contrast to the parabolic problems where $ \hat U ' (t_{n+1}) =\hat U ' (t), $ for all $t\in I_n,$ and thus the pointwise equation can be extended for all $t,$ in our case we should proceed  in a different way. In fact, we derive a pointwise equation for $\hat U$ by applying  piecewise linear interpolation to \eqref {pw_1}. Then we conclude, 
\begin{equation}
  \label{eq:Uhat_eqn}  
\Pi_1 Q^{\beta} \hat U(t)+ A\hat U(t) = \Pi_1 f(t)+\hat G(t), \qquad t \in [0,T],
\end{equation}
with initial data $\hat U(0) = U_0$, where $\hat G$ is a piecewise linear correction term whose role we explain next. {Here, $\Pi_1 Q^{\beta} \hat U$ is the piecewise linear interpolant of the function $Q^\beta \hat U$ defined in \eqref{pw_2}.} By definition, \eqref{eq:Uhat_eqn} is satisfied for $t = t_j$, $j = 1,\dots,N$ if $\hat G(t_j) = 0$. However, as 
we have set $Q^\beta \hat U(0) = 0$, in order that the equation is satisfied also in the first interval we need that $\hat G(0) = AU_0-f(0)$. Hence, the piecewise linear correction function $\hat G$ is defined by interpolating
\begin{equation}
  \label{eq:Ghat}  
\hat G(0) = AU_0-f(0), \quad \hat G(t_j) = 0, \; j = 1,\dots,N.
\end{equation}
We conclude therefore that  $\hat U$ is the solution of the original evolution problem \eqref{eq:sub_diff} with a modified right-hand side
\begin{equation}
      \label{pw_final}
  \dt^{\beta} \hat U(t)+ A\hat U(t)  = \Pi_1 f(t)+\hat G(t)+\dt^{\beta} \hat U(t)- \Pi_1 Q^{\beta} \hat U(t), \qquad t \in (0,T].
 \end{equation}

%
%
%
%
%
%
%

\subsection{Error equation} The error  $e = u-\hat U$ is then the solution of
\[
\dt^\beta e+Ae = f-\dt^{\beta} \hat U- A\hat U=: \hat R, \qquad e(0) = u^0-U_0,
\]
where in view of \eqref{pw_final} the residual $\hat R$ is given by 
\begin{equation}
\hat R   = f (t) - \Pi_1 f(t)-\hat G(t)- (\dt^{\beta} \hat U(t)- \Pi_1 Q^{\beta} \hat U(t)), \qquad t \in (0,T].
 \end{equation}

The continuous stability of the fractional problem  \eqref{eq:sub_diff} implies the desired a posteriori bounds. The stability estimates in Corollary~\ref{cor:stab_l2} and Theorem~\ref{thm:stab_linf} imply two different a posteriori error estimates. 

\begin{theorem}\label{thm:apost_l2}
  Let $u$ be the solution of \eqref{eq:sub_diff} and $U_n$, $n = 0,\dots,N$, the solution of the discrete system \eqref{eq:discrete_sys} (or \eqref{eq:discrete_sys_corr}). The error $ e = u-\hat U$ satisfies the bound
\[
  \begin{split}    
\int_0^t  \| e(\tau)\|^2+| e(\tau)|_1^2\,d\tau \leq& C_{t,\beta}^1 \int_0^t(g_{\beta,t}(\tau))^{-1}\left\|f(\tau)- \dt^\beta \hat U(\tau)-A \hat U(\tau)\right\|^2\,d\tau\\
&+C^2_{t,\beta}\|u^0-U_0\|^2,
  \end{split}
\]
for $t >0$, and $g_{\beta,t}$, $C^1_{t,\beta}$, $C^2_{t,\beta}$ as in Theorem~\ref{thm:stab_l2} and Corollary~\ref{cor:stab_l2}
\end{theorem}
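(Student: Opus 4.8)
The plan is to recognise the error $e = u - \hat U$ as the solution of an evolution problem of exactly the form \eqref{eq:sub_diff} and then to invoke the $L^2(H)$ stability estimate of Corollary~\ref{cor:stab_l2} essentially verbatim. The error equation preceding the statement asserts that
\[
\dt^\beta e + Ae = \hat R, \qquad e(0) = u^0 - U_0,
\]
with $\hat R = f - \dt^\beta \hat U - A\hat U$. This is precisely \eqref{eq:sub_diff} with the inhomogeneity $f$ replaced by $\hat R$ and the initial datum $u^0$ replaced by $u^0 - U_0$, so no fresh stability analysis is needed and the task reduces to a substitution.

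Concretely, I would apply Corollary~\ref{cor:stab_l2} on the interval $[0,t]$ --- the corollary is stated on $[0,T]$, but the terminal time plays no distinguished role and may be taken to be any $t>0$ --- with $f \mapsto \hat R$ and $u^0 \mapsto u^0 - U_0$. The weight $g_{\beta,t}$ and the constants $C^1_{t,\beta}$, $C^2_{t,\beta}$ are then exactly those furnished by the corollary, and expanding $\hat R = f - \dt^\beta \hat U - A\hat U$ inside the forcing integral produces the stated bound.

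The one point that needs care, and which I expect to be the main obstacle, is regularity. Corollary~\ref{cor:stab_l2} descends from Theorem~\ref{thm:stab_l2}, which in turn rests on the positivity Lemma~\ref{lem:pos}, and that lemma is stated for $g \in C^1([0,T];H)$. The error $e$ does not satisfy this hypothesis literally: the exact solution $u$ has a derivative singularity at the origin (cf.\ \eqref{eq:baz_bound}), while the reconstruction $\hat U$ is only piecewise linear, so $e$ is at best piecewise $C^1$ with an integrable singularity at $t=0$. I would therefore supply a density argument confirming that the integration-by-parts identity underlying Lemma~\ref{lem:pos} persists for such $e$. The manipulations in that proof pair the kernels $(t-\tau+\veps)^{-\beta}$ and $(t-\tau+\veps)^{-\beta-1}$ against $e$ and $e'$, all of which remain integrable here, so one may regularise $e$ (for instance by truncating near $t=0$ and smoothing across the nodes $t_j$), apply the lemma, and pass to the limit, the boundary terms vanishing by the integrability of $\dt^\beta e$. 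Once this extension is secured, the a posteriori estimate is immediate.
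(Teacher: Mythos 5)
Your proposal is correct and takes essentially the same route as the paper: the paper offers no separate proof of this theorem, obtaining it immediately from the error equation $\dt^\beta e + Ae = \hat R$, $e(0) = u^0 - U_0$, by applying Corollary~\ref{cor:stab_l2} with $f$ replaced by $\hat R$ and $u^0$ by $u^0 - U_0$ on $[0,t]$. Your closing regularity discussion (extending Lemma~\ref{lem:pos} beyond $C^1$ by regularisation) addresses a point the paper passes over in silence, but it does not change the substance of the argument.
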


\smallskip
\noindent
The next estimator controls the error in $L^\infty (H)\, .$

\begin{theorem}\label{thm:apost_linf}
  Let $u$ be the solution of \eqref{eq:sub_diff} and $U_n$, $n = 0,\dots,N$, the solution of the discrete system \eqref{eq:discrete_sys} (or \eqref{eq:discrete_sys_corr}). The error $ e = u-\hat U$ satisfies the bound
\[
\| e(t)\| \leq \|u^0-U_0\|+\frac1{\sin \theta}  C_{\beta,\varphi}\int_0^t(t-\tau)^{\beta-1}\left\|f(\tau)- \dt^\beta \hat U(\tau)-A \hat U(\tau)\right\|d\tau,
\]
for any $\theta \in (0,\pi/2)$, $t \in (0,T]$, with $\varphi = \max(0,\pi-(\pi-\theta)/\beta)$,  and 
\[
C_{\beta,\varphi} =  \frac1{\pi}(\cos \varphi)^{\beta-1} \Gamma(1-\beta).
\]
\end{theorem}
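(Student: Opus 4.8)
The plan is to observe that the error $e=u-\hat U$ itself solves a subdiffusion problem of the form \eqref{eq:sub_diff}, and then to invoke the $L^\infty(H)$ stability estimate of Theorem~\ref{thm:stab_linf} with the data replaced by the residual and by the initial error. Indeed, the error equation established just above the statement reads $\dt^\beta e+Ae=\hat R$ with $e(0)=u^0-U_0$, where $\hat R=f-\dt^\beta\hat U-A\hat U$. This is precisely problem \eqref{eq:sub_diff} with $f$ replaced by $\hat R$ and $u^0$ replaced by $u^0-U_0$, so the task reduces to checking the hypotheses of Theorem~\ref{thm:stab_linf} and reading off its conclusion.

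First I would verify the regularity requirements. Theorem~\ref{thm:stab_linf} asks the initial datum to lie in $D(A)$, so I need $u^0-U_0\in D(A)$; this holds in the standard setting where $u^0\in D(A)$ and $U_0$ is taken in $D(A)$, and is trivial when $U_0=u^0$. I would also confirm that $\hat R$ satisfies the growth condition ensuring the existence of its Laplace transform: since $\hat U$ is a fixed piecewise linear function on $[0,T]$ and $f$ obeys the standing assumption that $e^{-\sigma t}\|f(t)\|$ is bounded, $\hat R$ is bounded on $[0,T]$ and this is automatic. With these checks, Theorem~\ref{thm:stab_linf} applies directly and gives
\[
\|e(t)\|\le\|u^0-U_0\|+\frac1{\sin\theta}C_{\beta,\varphi}\int_0^t(t-\tau)^{\beta-1}\bigl\|\hat R(\tau)-A(u^0-U_0)\bigr\|\,d\tau,
\]
with $\varphi=\max(0,\pi-(\pi-\theta)/\beta)$ and $C_{\beta,\varphi}$ as stated.

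The only delicate point — and the main obstacle to obtaining the clean form in the statement — is the initial-data correction $A(u^0-U_0)$ that Theorem~\ref{thm:stab_linf} carries inside the integrand, arising from the reduction $w=u-u^0$ used in its proof. In the standard case $U_0=u^0$ this term vanishes, $e(0)=0$, and one recovers exactly the stated estimate with $\|u^0-U_0\|=0$. To keep an approximate initial datum $U_0\neq u^0$ while still separating $\|u^0-U_0\|$ from $\|\hat R\|$, I would instead argue via the representation \eqref{eq:representation_formula}: taking the Laplace transform of the error equation, the Caputo derivative contributes the term $s^{\beta-1}(u^0-U_0)$, so the transform of $e$ satisfies $(s^\beta+A)$ applied to it equals the transform of $\hat R$ plus $s^{\beta-1}(u^0-U_0)$, and inverting yields
\[
e(t)=\int_0^t k_1(t-\tau)\hat R(\tau)\,d\tau+E_\beta(-At^\beta)(u^0-U_0).
\]
The first term is bounded exactly as in Theorem~\ref{thm:stab_linf}, combining the resolvent bound \eqref{eq:resolvent} with Lemma~\ref{lem:kernel_bound} at $\mu=\beta$ to produce the displayed kernel $(t-\tau)^{\beta-1}$ and constant $\frac1{\sin\theta}C_{\beta,\varphi}$; the second is controlled by the contraction property $\|E_\beta(-At^\beta)\|\le1$, which follows from the complete monotonicity of the scalar Mittag--Leffler function together with the spectral calculus for the self-adjoint operator $A$. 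This delivers the stated bound for arbitrary $U_0$, the Mittag--Leffler estimate being the only ingredient beyond the stability lemmas already proved.
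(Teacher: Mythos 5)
Your proposal is correct, and its core step---recasting $e=u-\hat U$ as a solution of \eqref{eq:sub_diff} with forcing $\hat R=f-\dt^\beta\hat U-A\hat U$ and initial datum $u^0-U_0$, then invoking the $L^\infty(H)$ stability result---is exactly the paper's own (very terse) argument: the paper gives no proof beyond the remark that the stability estimates of Section~4 imply the a posteriori bounds. Where you genuinely diverge is in the treatment of the initial error, and your instinct there is right: applied verbatim, Theorem~\ref{thm:stab_linf} produces the integrand $\|\hat R(\tau)-A(u^0-U_0)\|$, which matches the stated bound only when $U_0=u^0$ (since $A$ is injective), a point the paper silently passes over even though the statement explicitly allows $U_0\neq u^0$. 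Your repair---Laplace-transforming the error equation, using $\mathscr{L}[\dt^\beta e](s)=s^\beta\hat e(s)-s^{\beta-1}e(0)$ to obtain $e(t)=\int_0^t k_1(t-\tau)\hat R(\tau)\,d\tau+E_\beta(-At^\beta)(u^0-U_0)$, bounding the convolution kernel via \eqref{eq:resolvent} and Lemma~\ref{lem:kernel_bound} with $\mu=\beta$ exactly as in the proof of Theorem~\ref{thm:stab_linf}, and controlling the homogeneous part by $\|E_\beta(-At^\beta)\|\leq 1$ (complete monotonicity of $E_\beta(-\,\cdot\,)$ on $[0,\infty)$ combined with the spectral theorem for the positive definite self-adjoint $A$)---is sound and in fact buys more than the route the paper gestures at: it yields the clean stated bound for arbitrary $U_0$, and it lets you drop the requirement $u^0-U_0\in D(A)$ from the final estimate, since $E_\beta(-At^\beta)$ is a contraction on all of $H$ (derive the representation for smooth data and conclude by density). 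Two cosmetic caveats: extend $\hat R$ by zero beyond $T$ so its Laplace transform exists, as you implicitly do, and note that $\hat R$ is bounded on $[0,T]$ because $\dt^\beta\hat U$ is continuous (as shown in Section~2), so the inversion and the exchange of integrals are unproblematic.
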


Next we investigate the optimality of the above a posteriori error estimators.

\subsection{Asymptotic behaviour  of the estimators}
In the extended numerical experiments presented in the last section we demonstrate that both estimators accurately capture the asymptotic behaviour of the error in several cases.
In the rest of this section we consider the question of asymptotic behaviour  of the a posteriori estimators under certain assumptions on the solution. We are particularly interested whether the estimator can converge at the optimal convergence order of the L1 scheme $O(\tstep^{2-\beta})$, i.e., better than linear. As a proof of concept, we just consider the estimator  in Theorem \ref{thm:apost_linf}
for the L1 scheme. Given   the presence of the kernels in the estimators and the complicated a priori analysis required, the discussion of the other cases is left for a future work. 
For the L1 scheme $\hat U$ is the solution of
\begin{equation}
  \label{eq:Uhat_eqn_L1}  
\Pi_1 \dt^{\beta} \hat U(t)+ A\hat U(t) = \Pi_1 f(t)+\hat G(t), \qquad t \in [0,T],
\end{equation}
with initial data $\hat U(0) = U_0$, where $\hat G$ is the piecewise linear correction term introduced above.
The approximations $\hat U$ satisfy the original problem \eqref{eq:sub_diff} with a modified right-hand side
\[
  \dt^{\beta} \hat U(t)+ A\hat U(t) = \Pi_1 f(t)+\hat G(t)+\dt^{\beta} \hat U(t)- \Pi_1\dt^{\beta} \hat U(t), \qquad t \in (0,T],
\]
and  the error  $e = u-\hat U$ solves 
\begin{equation}
  \label{eq:err_eqn}
  \dt^{\beta} e(t)+ A e(t) = f(t)-\Pi_1 f(t)-\hat G(t)+\Pi_1\dt^{\beta} \hat U(t)- \dt^{\beta} \hat U(t), \qquad t \in (0,T],
\end{equation}
and $ e(0) = u^0-U_0$. 


First of all let us investigate the effect of $\hat G$.

\begin{lemma}
  Let $\hat G$ be the correction function defined by \eqref{eq:Ghat}. Then for $t > \tstep_0$
\[
\int_0^t (g_{\beta,t}(\tau))^{-1}\|\hat G(\tau)\|^2 d\tau \leq C \|AU_0-f(0)\|^2 \tstep_0^{1+\beta},
\]
and
\[
  \begin{split}
\int_0^t (t-\tau)^{\beta-1} \|\hat G(\tau)\|d \tau &\leq \frac1{\beta}\|AU_0-f(0)\| \left(t^\beta-(t-\tstep_0)^\beta\right)\\
&\leq C\tstep_0 \|AU_0-f(0)\| (t{-\tstep_0})^{\beta-1},
  \end{split}
\]
for some constant $C > 0$ depending on $\beta$.
\end{lemma}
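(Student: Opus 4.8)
The plan is to exploit the fact that, by its defining interpolation conditions \eqref{eq:Ghat}, the correction function $\hat G$ is supported only on the first subinterval $I_0 = (0,\tstep_0]$. Since $\hat G$ is piecewise linear and vanishes at every node $t_1,\dots,t_N$, it must vanish identically for $\tau \geq \tstep_0$, while on $[0,\tstep_0]$ it is the linear function interpolating $AU_0-f(0)$ at $\tau=0$ and $0$ at $\tau=\tstep_0$. Hence
\[
\hat G(\tau) = (AU_0-f(0))\left(1-\frac{\tau}{\tstep_0}\right) \text{ for } \tau\in[0,\tstep_0], \qquad \hat G(\tau)=0 \text{ for } \tau>\tstep_0 ,
\]
so that $\|\hat G(\tau)\| = \|AU_0-f(0)\|\,(1-\tau/\tstep_0) \leq \|AU_0-f(0)\|$ on $[0,\tstep_0]$. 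Both integrals in the statement therefore localise to $[0,\tstep_0]$, which is what produces the positive powers $\tstep_0^{1+\beta}$ and $\tstep_0$ in the bounds.

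For the first estimate I would bound the weight from above independently of $t$. Since $g_{\beta,t}(\tau) = \tfrac1{\Gamma(1-\beta)}\big((t-\tau)^{-\beta}+\tau^{-\beta}\big) \geq \tfrac1{\Gamma(1-\beta)}\tau^{-\beta}$, taking reciprocals gives $(g_{\beta,t}(\tau))^{-1}\leq \Gamma(1-\beta)\,\tau^{\beta}$. Substituting this and the explicit form of $\hat G$ reduces the left-hand side to
\[
\Gamma(1-\beta)\,\|AU_0-f(0)\|^2 \int_0^{\tstep_0}\tau^{\beta}\Big(1-\tfrac{\tau}{\tstep_0}\Big)^2 d\tau .
\]
The rescaling $\tau = \tstep_0 s$ turns the remaining integral into $\tstep_0^{1+\beta}\int_0^1 s^{\beta}(1-s)^2\,ds = \tstep_0^{1+\beta}B(\beta+1,3)$, a Beta-function constant depending only on $\beta$, giving the claim with $C=\Gamma(1-\beta)B(\beta+1,3)$.

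For the second estimate I would discard the bounded factor $1-\tau/\tstep_0\leq 1$ and integrate the singular kernel directly. Because $\beta-1<0$,
\[
\int_0^t (t-\tau)^{\beta-1}\|\hat G(\tau)\|\,d\tau \leq \|AU_0-f(0)\|\int_0^{\tstep_0}(t-\tau)^{\beta-1}d\tau = \tfrac1{\beta}\|AU_0-f(0)\|\big(t^\beta-(t-\tstep_0)^\beta\big),
\]
which is the first asserted inequality. For the second I would apply the mean value theorem to $t\mapsto t^\beta$: there is $\xi\in(t-\tstep_0,t)$ with $t^\beta-(t-\tstep_0)^\beta = \beta\,\xi^{\beta-1}\tstep_0$, and since $\beta-1<0$ the map $\xi\mapsto\xi^{\beta-1}$ is decreasing, so $\xi^{\beta-1}\leq (t-\tstep_0)^{\beta-1}$; this yields $\tfrac1{\beta}(t^\beta-(t-\tstep_0)^\beta)\leq \tstep_0(t-\tstep_0)^{\beta-1}$.

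Overall the lemma is essentially a direct computation. The one conceptual step is recognising that $\hat G$ lives only on $I_0$, which localises the estimates to a short interval; crucially, because $\hat G$ vanishes near $\tau=t$ for $t>\tstep_0$, no delicate analysis of the kernel $g_{\beta,t}$ or of $(t-\tau)^{\beta-1}$ at the singular endpoint $\tau=t$ is required. The only mild subtlety is converting the difference of powers $t^\beta-(t-\tstep_0)^\beta$ into the singular weight $(t-\tstep_0)^{\beta-1}$, handled cleanly by the mean value theorem together with the monotonicity of $\xi^{\beta-1}$.
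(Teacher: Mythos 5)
Your proof is correct and follows essentially the same route as the paper: both localise $\hat G$ to $I_0$ via its explicit piecewise linear form, bound $(g_{\beta,t}(\tau))^{-1} \leq \Gamma(1-\beta)\tau^{\beta}$ for the $L^2$-type estimate, and obtain the second estimate by direct integration of $(t-\tau)^{\beta-1}$ followed by the mean value theorem with monotonicity of $\xi^{\beta-1}$. The only (immaterial) difference is that you retain the decay factor $(1-\tau/\tstep_0)^2$ and get the Beta-function constant $\Gamma(1-\beta)B(\beta+1,3)$, whereas the paper simply bounds $\|\hat G(\tau)\| \leq \|AU_0 - f(0)\|$ and obtains $\Gamma(1-\beta)/(1+\beta)$.
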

\begin{proof}
The first inequality follows from 
  \[
  \begin{split}    
\int_0^t (g_{\beta,t}(\tau))^{-1}\|\hat G(\tau)\|^2 d\tau &\leq 
\Gamma(1-\beta)\|AU_0-f(0)\|^2\int_0^{\tstep_0} ((t-\tau)^{-\beta}+\tau^{-\beta})^{-1}d\tau\\
& \leq \frac1{\beta+1} \Gamma(1-\beta)\|AU_0-f(0)\|^2 \tstep_0^{1+\beta},
  \end{split}
\]
{where we used
\[
\int_0^{\tstep_0} ((t-\tau)^{-\beta}+\tau^{-\beta})^{-1}d\tau
\leq \int_0^{\tstep_0} \tau^{\beta}d\tau \leq \frac1{1+\beta}\tstep_0^{\beta+1}.
\]
}
The second inequality is obtained by direct computation and the estimate
\[
\frac1{\tstep_0}\left(t^\beta-(t-\tstep_0)^{\beta}\right) \leq \beta \max_{\tau \in [t-\tstep_0,t]} \tau^{\beta-1} = \beta (t-\tstep_0)^{\beta-1}.
\]
\end{proof}
\begin{remark}\label{rem:hatG_error}
  This suggests that in general the $L^2$ norm of the error is no better than $O(\tstep_0^{\frac{1+\beta}{2}})$ and the $L^\infty$ norm no better than $O(t^{\beta-1}\tstep_0)$. Hence, denoting by $\tstep = \max_j \tstep_j$,   we require at least that $\tstep_0 = O(\tstep^{\frac{4-2\beta}{1+\beta}})$ in order to obtain optimal  assymptotic convergence order $O(\tstep^{2-\beta})$ in the $L^2$ norm. In the $L^\infty$ norm, we require at least $\tstep_0 = O(\tstep^{2-\beta})$ to obtain optimal convergence for $t_n > c$ for any fixed constant $c >0$. Further,  $\tstep_0 = O(\tstep^{\frac{2-\beta}{\beta}})$ is required if optimal convergence is to be expected uniformly for $t_n > \tstep_0$; this is compatible with the result from \cite{StynesOG}; see \eqref{eq:stynes_graded}.
\end{remark}

If the data $f$ is smooth for $t >0$ then so is the solution $u$ with a possible singularity at $t = 0$; see \eqref{eq:representation_formula}.

\begin{lemma}\label{lem:L1_err}
Let $u \in {C[0,T] \cap} C^2(0,T]$ {and denote by $\hat u$ the piecewise linear interpolant of the data $u(t_j)$, $j = 0,\dots,n+1$ where $0 = t_0 < t_1 < \dots < t_{n+1}$}. Then there exist $z_0 \in (0,t_1)$ and $\tilde z_j \in (t_{j-1},t_{j+1})$, $j = 1,\dots,n$,  such that for $t \in I_n$, $n \geq 1$,
\[
  \begin{split}    
\left|\Pi_1 \dt^\beta \hat u(t) - \dt^\beta \hat u(t)\right|
\leq& C \Bigl(\tstep_0^2 t^{-1-\beta}|u'(z_0)|\\ &+\sum_{j = 1}^{n-1} \tstep_j^2(\tstep_{j-1}+\tstep_j)(t-t_j)^{-\beta-1}|u''(\tilde z_j)|+(\tstep_{n-1}+\tstep_n)\tstep_n^{1-\beta}|u''(\tilde z_n)|\Bigr).
  \end{split}
\]
For $t \in I_0$
\[
\left|\Pi_1 \dt^\beta \hat u(t) - \dt^\beta \hat u(t)\right| \leq C \tstep_0^{1-\beta} |u'(z_0)|.
\]
\end{lemma}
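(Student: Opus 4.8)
The plan is to read the statement as an estimate for the piecewise-linear \emph{interpolation error} of the continuous function $g := \dt^\beta \hat u$ on a single subinterval, since $\Pi_1 \dt^\beta \hat u - \dt^\beta \hat u = \Pi_1 g - g$ and on $I_n$ this difference depends only on $g|_{[t_n,t_{n+1}]}$ and the nodal values $g(t_n),g(t_{n+1})$. As $\hat u'$ is piecewise constant with $\hat u'|_{I_j}=\delta_j := (u(t_{j+1})-u(t_j))/\tstep_j$, I would start from the analogue of \eqref{eq:upp_repr},
\[
g(t) = a_0(t)\,\delta_0 + \sum_{j=1}^{n} a_j(t)\bigl(\delta_j - \delta_{j-1}\bigr),
\]
which already exposes the second-difference structure of the data. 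By linearity of $\Pi_1-\mathrm{Id}$ this reduces the problem to estimating $(\Pi_1-\mathrm{Id})a_j$ on $I_n$ for each $j$, weighted by $\delta_0$ for $j=0$ and by $\delta_j-\delta_{j-1}$ for $1\le j\le n$.

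Next I would convert the coefficients into pointwise derivatives of $u$ by the mean value theorem: $\delta_0 = u'(z_0)$ for some $z_0\in(0,t_1)$, while, since $(\delta_j-\delta_{j-1})/(t_{j+1}-t_{j-1})$ is a second divided difference of $u$, the mean value theorem for divided differences gives $\delta_j-\delta_{j-1} = \tfrac12(\tstep_{j-1}+\tstep_j)\,u''(\tilde z_j)$ with $\tilde z_j\in(t_{j-1},t_{j+1})$. This produces exactly the factors $|u'(z_0)|$ and $(\tstep_{j-1}+\tstep_j)|u''(\tilde z_j)|$ in the statement. It then remains to bound the interpolation errors $(\Pi_1-\mathrm{Id})a_j$ on $I_n$, for which I would split the sum into the \emph{regular} terms $0\le j\le n-1$ and the single \emph{diagonal} term $j=n$.

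For the regular terms $a_j$ is $C^2$ on $\overline{I_n}$ (because $t-t_j\ge t_n-t_j>0$ there), so the standard quadratic estimate applies: $(\Pi_1-\mathrm{Id})a_j(t)=\tfrac12(t-t_n)(t-t_{n+1})a_j''(\xi)$ for some $\xi\in I_n$, with $a_j''(t)=-\tfrac{\beta}{\Gamma(1-\beta)}(t-t_j)^{-\beta-1}$; naively this contributes a factor $\tstep_n^2(t-t_j)^{-\beta-1}$, giving the $t^{-1-\beta}|u'(z_0)|$ term for $j=0$ and the sum with kernel $(t-t_j)^{-\beta-1}$ for $1\le j\le n-1$. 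The diagonal term must be handled separately, since $a_n''(t)=-\tfrac{\beta}{\Gamma(1-\beta)}(t-t_n)^{-\beta-1}$ blows up at the left endpoint of $I_n$. Here I would use the elementary bound directly: $a_n(t_n)=0$ and $0\le a_n(t)\le \tstep_n^{1-\beta}/\Gamma(2-\beta)$ on $I_n$, so its interpolant lies in the same range and $|(\Pi_1-\mathrm{Id})a_n|\le \tstep_n^{1-\beta}/\Gamma(2-\beta)$, producing the term $(\tstep_{n-1}+\tstep_n)\tstep_n^{1-\beta}|u''(\tilde z_n)|$. The case $t\in I_0$ is the same elementary computation applied to $a_0$ on $I_0$ (where $a_0(0)=0$ and $a_0\le \tstep_0^{1-\beta}/\Gamma(2-\beta)$), yielding the stated $\tstep_0^{1-\beta}|u'(z_0)|$ bound.

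The main obstacle is the accurate bookkeeping of the mesh sizes and kernel singularities in the regular sum and the transition to the non-$C^2$ diagonal term. The crude $\tfrac18\tstep_n^2\max_{I_n}|a_j''|$ estimate degrades as $j\to n$ (where $t-t_j\to\tstep_{n-1}$) and near the initial singularity of $a_0$, and it produces the global step $\tstep_n^2$ rather than the sharper local factors $\tstep_j^2$, $\tstep_0^2$ in the statement. Recovering the local step sizes requires exploiting the smallness of the increments $a_j-a_{j+1}\sim \tstep_j\,a_j'$ together with the second-difference weights, so that the contribution of each node $t_j$ is measured at its own scale; controlling these near-diagonal and initial contributions while keeping the powers of $(t-t_j)$ and the local steps $\tstep_j$ consistent is where the real work lies, the remaining algebra being routine.
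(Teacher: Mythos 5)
Your proposal is correct and follows essentially the same route as the paper's proof: the same representation \eqref{eq:upp_repr} with coefficients $\delta_0$ and $\delta_j-\delta_{j-1}$, the same mean-value reduction producing $u'(z_0)$ and $(\tstep_{j-1}+\tstep_j)|u''(\tilde z_j)|$, the same quadratic interpolation estimate on $I_n$ for the regular terms $j\le n-1$, the same sup-norm bound $|\Pi_1 a_n(t)-a_n(t)|\le C\tstep_n^{1-\beta}$ for the diagonal term, and the $I_0$ case obtained from the diagonal bound with $n=0$. Two remarks. First, the \emph{real work} you defer at the end --- upgrading the factor $\tstep_n^2$ from the naive estimate to the local factors $\tstep_j^2$, $\tstep_0^2$ appearing in the statement --- is not carried out in the paper either: its proof records exactly your naive bound $|\Pi_1 a_j(t)-a_j(t)|\le C\tstep_n^2(t-t_j)^{-1-\beta}$ for $t\in I_n$, $j\le n-1$, so the lemma as stated is established only up to replacing $\tstep_j^2$ by $\tstep_n^2$; this is immaterial in the paper's subsequent application, where the mesh is uniform and $\tstep_j=\tstep_n=\tstep$. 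Second, your mean value theorem for second divided differences at $j=1$ involves the node $t_0=0$, where $u''$ need not exist under the hypothesis $u\in C[0,T]\cap C^2(0,T]$; the paper sidesteps this by writing $\delta_j-\delta_{j-1}=u'(z_j)-u'(z_{j-1})=(z_j-z_{j-1})u''(\tilde z_j)$ with $z_{j-1}\in(t_{j-1},t_j)$ and $z_j\in(t_j,t_{j+1})$ interior points, so that $u''$ is only evaluated inside $(0,T]$ and $z_j-z_{j-1}\le\tstep_{j-1}+\tstep_j$ --- a one-line repair that leaves the rest of your argument unchanged.
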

\begin{proof}
  First note that there exist $z_j \in (t_j,t_{j+1})$ and $\tilde z_j \in (z_j,z_{j+1})$  such that
\[
\frac{u(t_1)-u(0)}{\tstep_0} = u'(z_0)
\]
and
\[
  \begin{split}    
\frac{u(t_{j+1})-u(t_j)}{\tstep_j}-\frac{u(t_{j})-u(t_{j-1})}{\tstep_j}
&= u'(z_j)-u'(z_{j-1})\\  &= (z_j-z_{j-1})u''(\tilde z_j).
  \end{split}
\]
Using the representation \eqref{eq:upp_repr} we hence have for $t \leq t_{n+1}$
\[
  \begin{split}    
\Pi_1 \dt^\beta \hat u(t) - \dt^\beta \hat u(t)
=& (\Pi_1 a_0(t)-a_0(t))u'(z_0)\\ &+\sum_{j = 1}^n (\Pi_1 a_j(t)-a_j(t)) (z_j-z_{j-1})u''(\tilde z_j).
  \end{split}
\]
For $t \in I_n$ and $j \leq n-1$ we have
\[
|\Pi_1 a_j(t)-a_j(t)| \leq C \tstep_n^2 (t-t_j)^{-1-\beta}.
\]
Whereas for $t \in I_n$
\[
|\Pi_1 a_n(t)-a_n(t)| \leq C \tstep_n^{1-\beta}.
\]
Combining the last three statements gives the first result, whereas the last statement for $n = 0$ gives the  second required result.
\end{proof}


If the solution $u$ is smooth, namely $u \in C^2[0,T]$ it is shown in \cite{LinXu,SunWu} that for uniform time-steps $\tstep = \tstep_j$ optimal convergence order
\[
\|u(t_n)-U_n\| \leq C \tstep^{2-\beta}
\]
is obtained.
We investigate now if our estimator in Theorem~\ref{thm:apost_linf} achieves this.

First of all we note that if $u$ is continuously differentiable, then $\dt^\beta u(0) = 0$ and consequently \eqref{eq:sub_diff} implies $Au_0 = f(0)$. Therefore if we take as initial data $U_0 = u^0$, the correction function vanishes $\hat G \equiv 0$. Next, for $u$ to be smooth, in general  $f$ is not smooth but behaves as
\[
f(t) \sim f(0)+c_1 t^{1-\beta}
\]
asymptotically as $t \rightarrow 0$. To understand the origin of this singularity, simply substitute $u = u(0)+u'(0) t$ in the fractional equation  \eqref{eq:sub_diff}.

Hence, in order to investigate the term due to $f$ in the a posteriori error estimate we need the following lemma.

\begin{lemma}
Let $\beta \in (0,1)$. Then for a uniform mesh and $n \geq 1$
\[
\int_0^{t_{n+1}} (t_{n+1}-\tau)^{\beta-1} |\Pi_1 \tau^{1-\beta} -\tau^{1-\beta}|d\tau \leq C \tstep^{2-\beta} (t_{n+1}-\tstep)^{\beta-1}.
\]
\end{lemma}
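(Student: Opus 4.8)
The plan is to set $\phi(\tau) = \tau^{1-\beta}$ and exploit that the two factors in the integrand are singular at opposite ends of the interval: the interpolation error $|\Pi_1\phi-\phi|$ is largest near $\tau=0$, where $\phi'$ blows up, while the kernel $(t_{n+1}-\tau)^{\beta-1}$ is singular near $\tau=t_{n+1}$. Accordingly I would split the integral over the first subinterval $I_0=(0,\tstep]$ and the remaining subintervals $I_j$, $j=1,\dots,n$, estimate the interpolation error separately on each (using a crude direct bound on $I_0$ and the standard $\phi''$-bound elsewhere), and then integrate against the kernel. Since the mesh is uniform, $t_j=j\tstep$ and $t_{n+1}-\tstep=t_n$, so the target bound is $C\tstep^{2-\beta}t_n^{\beta-1}$.

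On $I_0$ the usual interpolation estimate via $\phi''$ fails because $\phi$ is not $C^2$ up to the origin, so instead I would use the explicit interpolant $\Pi_1\tau^{1-\beta}=\tstep^{-\beta}\tau$ on $[0,\tstep]$ and the elementary observation that both $\tau^{1-\beta}$ and $\tstep^{-\beta}\tau$ are bounded by $\tstep^{1-\beta}$ there, giving $|\Pi_1\tau^{1-\beta}-\tau^{1-\beta}|\le \tstep^{1-\beta}$. On $[0,\tstep]$ the kernel satisfies $(t_{n+1}-\tau)^{\beta-1}\le(t_{n+1}-\tstep)^{\beta-1}=t_n^{\beta-1}$, so after multiplying by the length $\tstep$ this contribution is exactly of the desired form $C\tstep^{2-\beta}t_n^{\beta-1}$.

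For $j\ge 1$ the function $\phi$ is smooth on $I_j$, so the pointwise bound $|\Pi_1\phi(\tau)-\phi(\tau)|\le \tfrac18\tstep^2\max_{I_j}|\phi''|$ applies; since $|\phi''(\tau)|=\beta(1-\beta)\tau^{-1-\beta}$ is decreasing, its maximum on $I_j$ is at $t_j$, giving $|\Pi_1\phi-\phi|\le C\tstep^2 t_j^{-1-\beta}$. Because $t_j\ge\tau/2$ for $\tau\in I_j$ when $j\ge1$, I can replace $t_j^{-1-\beta}$ by $C\tau^{-1-\beta}$ and bound the sum over $j$ by the single integral
\[
C\tstep^2\int_{\tstep}^{t_{n+1}}\tau^{-1-\beta}(t_{n+1}-\tau)^{\beta-1}\,d\tau .
\]
Here the key point is that $\tau^{-1-\beta}$ is \emph{not} integrable near $0$, so the bulk of the integral comes from $\tau\approx\tstep$; splitting at $t_{n+1}/2$, the left portion is controlled by $(t_{n+1}/2)^{\beta-1}\int_{\tstep}^{\infty}\tau^{-1-\beta}\,d\tau\le C\,t_{n+1}^{\beta-1}\tstep^{-\beta}$, and the right portion by $(t_{n+1}/2)^{-1-\beta}\int_{t_{n+1}/2}^{t_{n+1}}(t_{n+1}-\tau)^{\beta-1}\,d\tau\le C\,t_{n+1}^{-1}$. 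Multiplying by $\tstep^2$ yields $C(\tstep^{2-\beta}t_{n+1}^{\beta-1}+\tstep^2 t_{n+1}^{-1})$; the first term is already $\le C\tstep^{2-\beta}t_n^{\beta-1}$, and the second is dominated by it since $\tstep^{\beta}\le t_n^{\beta}\le t_{n+1}t_n^{\beta-1}$. Adding the $I_0$ contribution gives the claim.

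I expect the main obstacle to be the bookkeeping in the last step: the interpolation error grows like $\tau^{-1-\beta}$ near the origin (nonintegrable) while the kernel is simultaneously singular at the other endpoint, and one must verify that the combination produces precisely the sharp power $\tstep^{2-\beta}t_n^{\beta-1}$ rather than a weaker bound. The choice to bound the first interval by hand, and to convert the sum into a weighted integral split at the midpoint so that the $\tstep^{-\beta}$ factor appears cleanly, is what makes the constants line up.
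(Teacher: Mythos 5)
Your proof is correct, and it follows the paper's decomposition exactly up to one genuinely different ingredient. Like the paper, you split at the first interval, use the crude $O(\tstep^{1-\beta})$ interpolation bound on $I_0$ (the paper computes $\max_{t\in[0,\tstep]}|\tstep^{-\beta}t-t^{1-\beta}|$ exactly, same order), and use the standard second-derivative bound $C\tstep^2 t_j^{-1-\beta}$ on $I_j$ for $j\ge 1$. Where you diverge is in summing these contributions against the kernel: the paper keeps the discrete sum, reduces it to $\tstep\sum_{j=1}^{n-1}j^{-\beta-1}(n-j)^{\beta-1}$ plus a separately treated $j=n$ term $C\tstep^{2+\beta}t_n^{-\beta-1}$, and invokes Lubich's discrete convolution estimate \cite[Lemma~5.3]{Lub88I}, which gives $O(n^{\beta-1})$. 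You instead majorize the sum by the continuous integral $C\tstep^2\int_\tstep^{t_{n+1}}\tau^{-1-\beta}(t_{n+1}-\tau)^{\beta-1}\,d\tau$ (using $t_j\ge\tau/2$ on $I_j$, valid since $j+1\le 2j$ for $j\ge1$) and evaluate it elementarily by splitting at $t_{n+1}/2$, obtaining $C\bigl(\tstep^{2-\beta}t_{n+1}^{\beta-1}+\tstep^2 t_{n+1}^{-1}\bigr)$; your final absorption $\tstep^2 t_{n+1}^{-1}\le\tstep^{2-\beta}t_n^{\beta-1}$ plays precisely the role of the paper's closing observation $\tstep^{2+\beta}t_n^{-\beta-1}\le C\tstep^{2-\beta}t_n^{\beta-1}$ for $t_n>\tstep$. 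The trade-off: your argument is self-contained and makes transparent where the sharp power $\tstep^{2-\beta}t_n^{\beta-1}$ comes from (the competition of the two endpoint singularities), whereas the paper's citation of the discrete lemma is shorter and stays at the level of the mesh; an added small virtue of your route is that the continuous-integral comparison would adapt more readily to non-uniform meshes, where the discrete convolution lemma does not directly apply.
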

\begin{proof}
By results on linear interpolation we have that
   \[
|\Pi_1 t^{1-\beta} -t^{1-\beta}| \leq \frac12 (1-\beta)\beta \tstep^2 t_n^{-\beta-1} \qquad t \in I_n
\]
and  for $t \in I_0$
 \[
   \begin{split}     
|\Pi_1 t^{1-\beta} -t^{1-\beta}| &\leq {\max_{t \in [0,\tstep]} |\tstep^{-\beta}t -t^{1-\beta}|}\\
&{= }   (1-\beta)^{\frac{1}{\beta}}((1-\beta)^{-1}-1)\tstep^{1-\beta}.
   \end{split}
\]
Hence for $t \in I_n = (t_n,t_{n+1}]$
\[
  \begin{split}    
\int_0^{t_{n+1}} (t_{n+1}-\tau)^{\beta-1} |\Pi_1 \tau^{1-\beta} -\tau^{1-\beta}|d\tau \leq& C\tstep^{1-\beta} \int_0^\tstep (t_{n+1}-\tau)^{\beta-1}d\tau \\
&+ C\tstep^2 \sum_{j = 1}^{n}t_j^{-\beta-1}\int_{t_j}^{t_{j+1}} (t_{n+1}-\tau)^{\beta-1}d\tau\\
\leq& C\tstep^{2-\beta} (t_{n+1}-\tstep)^{\beta-1} \\
&+ C\tstep^3 \sum_{j = 1}^{n-1}t_j^{-\beta-1}(t_{n+1}-t_{j+1})^{\beta-1}\\ 
&+C\tstep^{2+\beta} t_n^{-\beta-1} \\
=& C\tstep^{2-\beta} (t_{n+1}-\tstep)^{\beta-1} 
+ C\tstep \sum_{j = 1}^{n-1}j^{-\beta-1}(n-j)^{\beta-1}\\ 
&+C\tstep^{2+\beta} t_n^{-\beta-1} \\
\leq& C\tstep^{2-\beta} (t_{n+1}-\tstep)^{\beta-1} 
+ C\tstep^{2-\beta} t_n^{\beta-1}\\
&+C\tstep^{2+\beta} t_n^{-\beta-1}.
  \end{split}
\]
In the last step we used the following estimate from \cite[Lemma~5.3]{Lub88I}
\[
\sum_{j = 1}^{n-1}j^{-\beta-1}(n-j)^{\beta-1} = O(n^{\beta-1}).
\]
  As $\tstep^{2+\beta}t_n^{-\beta-1} \leq C \tstep^{2-\beta}t_n^{\beta-1}$ for $t_n > \tstep$ we have the required result.
\end{proof}

Finally we investigate the term due to $\Pi_1 \dt^\beta \hat U(\tau) - \dt^\beta \hat U(\tau)$.

\begin{lemma}
  Let $u \in C^2[0,T]$, then for $t > \tstep$
\[
\int_0^t (t-\tau)^{\beta-1}\left|\Pi_1 \dt^\beta \hat u(\tau) - \dt^\beta \hat u(\tau)\right| d\tau  \leq C \tstep^{2-\beta} (t-\tstep)^{\beta-1}.
\]
\end{lemma}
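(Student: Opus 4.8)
The plan is to bound the integral by integrating the pointwise estimate from Lemma~\ref{lem:L1_err} against the kernel $(t-\tau)^{\beta-1}$, splitting the time axis into the region near the origin where the singular $t^{-1-\beta}$ term dominates and the bulk region where the second-derivative sum governs the behaviour. Since $u \in C^2[0,T]$, all the derivative factors $|u'(z_0)|$, $|u''(\tilde z_j)|$ appearing in Lemma~\ref{lem:L1_err} are bounded by a constant depending only on $\|u\|_{C^2}$, so I would absorb them into $C$ at the outset and reduce to a purely geometric estimate involving the mesh quantities $\tstep_j = \tstep$ on a uniform grid.

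First I would handle the contribution of the initial-interval term. For $\tau \in I_n$ the pointwise bound contributes a piece of size $C\tstep_0^2 \tau^{-1-\beta} = C\tstep^2 \tau^{-1-\beta}$; integrating $\int_0^t (t-\tau)^{\beta-1}\tstep^2\tau^{-1-\beta}\,d\tau$ is the delicate part, because $\tau^{-1-\beta}$ is non-integrable at the origin. The resolution is that Lemma~\ref{lem:L1_err} only asserts this form for $t \in I_n$ with $n\geq 1$, and for $\tau \in I_0$ the separate second bound $C\tstep^{1-\beta}|u'(z_0)|$ applies; so I would split off $\int_0^{\tstep}(t-\tau)^{\beta-1}\tstep^{1-\beta}\,d\tau \leq C\tstep^{2-\beta}(t-\tstep)^{\beta-1}$ exactly as in the preceding $f$-singularity lemma, and on $[\tstep,t]$ the factor $\tau^{-1-\beta}$ is bounded and contributes at worst the same order.

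Next I would treat the two sums. For the term $\sum_{j=1}^{n-1}\tstep_j^2(\tstep_{j-1}+\tstep_j)(t-t_j)^{-\beta-1}|u''(\tilde z_j)|$, on a uniform mesh this is $C\tstep^3\sum_j (t-t_j)^{-\beta-1}$, and integrating against $(t-\tau)^{\beta-1}$ produces exactly the convolution sum $\tstep\sum_{j=1}^{n-1} j^{-\beta-1}(n-j)^{\beta-1}$ after rescaling, which by \cite[Lemma~5.3]{Lub88I} is $O(n^{\beta-1}) = O(\tstep^{1-\beta}t_n^{\beta-1})$; combined with the $\tstep$ prefactor this yields $C\tstep^{2-\beta}t_n^{\beta-1}$. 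The final endpoint term $(\tstep_{n-1}+\tstep_n)\tstep_n^{1-\beta}|u''(\tilde z_n)| = C\tstep^{2-\beta}$ is supported on the last interval $I_n$ near $\tau = t$, and $\int_{t_n}^{t}(t-\tau)^{\beta-1}\,d\tau \leq C\tstep^\beta$ gives a higher-order contribution.

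The main obstacle is the convolution sum estimate and the careful bookkeeping to confirm that every piece collapses to the single target order $C\tstep^{2-\beta}(t-\tstep)^{\beta-1}$; in particular one must check that the bulk bound $\tstep^{2-\beta}t_n^{\beta-1}$ and the endpoint bound are dominated by $\tstep^{2-\beta}(t-\tstep)^{\beta-1}$ for $t > \tstep$, using $t_n \geq t - \tstep$ and the monotonicity of $s \mapsto s^{\beta-1}$. I would conclude by collecting the three contributions, invoking \cite[Lemma~5.3]{Lub88I} as the one nontrivial analytic input, exactly mirroring the structure of the immediately preceding lemma on the singular forcing term.
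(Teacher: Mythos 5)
Your overall plan---integrate the pointwise bound of Lemma~\ref{lem:L1_err} against the kernel and split at $\tau = \tstep$---is exactly the paper's strategy, but the decisive step in your proposal would fail as written. On $[\tstep,t]$ you dispose of the term $\tstep^2\tau^{-1-\beta}$ by noting that ``the factor $\tau^{-1-\beta}$ is bounded''; but its bound there is $\tstep^{-1-\beta}$, and inserting that constant gives
\[
\tstep^2\,\tstep^{-1-\beta}\int_\tstep^t(t-\tau)^{\beta-1}\,d\tau = \tfrac1\beta\,\tstep^{1-\beta}(t-\tstep)^{\beta},
\]
which is $O(\tstep^{1-\beta})$ --- one full power of $\tstep$ worse than the claimed rate. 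The paper instead keeps the decay of $\tau^{-1-\beta}$ and estimates the genuine convolution $\tstep^{2}\int_\tstep^t \tau^{-1-\beta}(t-\tau)^{\beta-1}\,d\tau \leq C\tstep^{2-\beta}(t-\tstep)^{\beta-1}$; this follows, e.g., by splitting at $\tau = t/2$, using $(t-\tau)^{\beta-1}\leq C(t-\tstep)^{\beta-1}$ together with $\int_\tstep^{t/2}\tau^{-1-\beta}\,d\tau \leq \beta^{-1}\tstep^{-\beta}$ on the left piece, and $\tau^{-1-\beta}\leq Ct^{-1-\beta}$ together with $\int_{t/2}^t(t-\tau)^{\beta-1}\,d\tau \leq Ct^{\beta}$ on the right. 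That beta-function-type estimate is the analytic heart of the lemma and cannot be replaced by a sup-bound on $\tau^{-1-\beta}$.

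Two further steps are off, though they are repairable. First, your reduction of the second-derivative sum to the convolution sum $\sum_j j^{-\beta-1}(n-j)^{\beta-1}$ of \cite[Lemma~5.3]{Lub88I} misidentifies the structure: the weights in Lemma~\ref{lem:L1_err} are $(\tau-t_j)^{-\beta-1}$, singular at the moving node $t_j$, so integrating against the kernel yields a double sum with weights of the form $(m-j)^{-1-\beta}(n-m)^{\beta-1}$, not $j^{-\beta-1}(n-j)^{\beta-1}$; the Lubich lemma (whose singular factor sits at the origin) is what the preceding lemma on $\Pi_1\tau^{1-\beta}$ needs, not this one. The correct treatment here is in fact simpler and is what the paper does: it bounds the sum pointwise before any kernel integration, $\tstep^3\sum_{j=1}^{n-1}(\tau-t_j)^{-\beta-1}\leq \tstep^2\int_0^{t_{n-1}}(\tau-s)^{-\beta-1}\,ds\leq C\tstep^{2-\beta}$ for $\tau \in I_n$ (equivalently, $\sum_k k^{-1-\beta}$ converges), so this term merges into the $\tstep^{2-\beta}$ endpoint term. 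Second, that endpoint term is not ``supported on the last interval'': it appears in the pointwise bound on every interval $I_m$ containing $\tau$, so its total contribution is $\tstep^{2-\beta}\int_0^t(t-\tau)^{\beta-1}\,d\tau = O(\tstep^{2-\beta})$, which is of exactly the target order (absorbed using $(t-\tstep)^{\beta-1}\geq T^{\beta-1}$), not a higher-order remainder. None of these slips breaks the outline, but as stated the proposal does not establish the rate $C\tstep^{2-\beta}(t-\tstep)^{\beta-1}$.
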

\begin{proof}  
For $t \in I_n$ and $n > 0$, Lemma~\ref{lem:L1_err} implies
\[
  \begin{split}    
\left|\Pi_1 \dt^\beta \hat u(t) - \dt^\beta \hat u(t)\right|
&\leq C \left(\tstep^2 t^{-1-\beta}+ \tstep^3\sum_{j = 1}^{n-1}(t-t_j)^{-\beta-1}+\tstep^{2-\beta}\right)\\
&\leq C\left(\tstep^2 t^{-1-\beta}+\tstep^2\int_0^{t_{n-1}}(t-\tau)^{-\beta-1}d\tau+\tstep^{2-\beta}\right)\\
&\leq C\left(\tstep^2 t^{-1-\beta}+\tstep^{2-\beta}\right).
  \end{split}
\]
For $t \in I_0$
\[
\left|\Pi_1 \dt^\beta \hat u(t) - \dt^\beta \hat u(t)\right| \leq C \tstep^{1-\beta}.
\]
Hence
\[
  \begin{split}
    \int_0^t (t-\tau)^{\beta-1}\left|\Pi_1 \dt^\beta \hat u(\tau) - \dt^\beta \hat u(\tau)\right| d\tau \leq& C \left(\tstep^{1-\beta} \int_0^\tstep (t-\tau)^{\beta-1}d\tau\right.\\ &+\tstep^{2-\beta}\int_\tstep^t (t-\tau)^{\beta-1}d\tau\\
&+ \left. \tstep^{2}\int_\tstep^t \tau^{-1-\beta}(t-\tau)^{\beta-1}d\tau\right)\\
\leq& C \left(\tstep^{1-\beta} (t^\beta-(t-\tstep)^{\beta})\right.\\
&+ \left. \tstep^{2}\int_\tstep^t \tau^{-1-\beta}(t-\tau)^{\beta-1}d\tau\right)\\
\leq& C \tstep^{2-\beta} (t-\tstep)^{\beta-1}.
  \end{split}
\]
\end{proof}

\section{Numerical experiments}\label{sec:numerics}

\subsection{Fractional differential equation with $f \equiv 0$}
First we consider the simple fractional ordinary differential equation
\begin{equation}
  \label{eq:ode_0}  
\dt^\beta u+\lambda u = 0, \qquad u(0) = u^0, \qquad t \in (0,T),
\end{equation}
with $\lambda > 0$ a fixed constant.

The solution is given by $u(t) = E_\beta(-\lambda t^\beta)u^0$ where
\[
 E_\beta(z) = \sum_{m = 0}^\infty \frac{z^m}{\Gamma(\beta m+1)}
\]
is the Mittag--Leffler function. Note that $u \sim (1-\frac1{\Gamma(\beta+1)} t^{\beta})u^0$  for $t \rightarrow 0$, hence already the first derivative of $u$ is unbounded at $t = 0$.

We will compare the  convergence of the L1 scheme and CQ with our estimators. The meshes will be of the form
\[
t_j = T (j/N)^k
\]
with $k \geq 1$; the mesh is uniform if $k = 1$ and graded towards 0 if $ k > 1$. The exact initial data will be used, i.e., $U_0 = u^0$. 

The estimators in Theorem~\ref{thm:apost_l2} and Theorem~\ref{thm:apost_linf} require the numerical computation of the outer integral, which we throughout this section compute using a compound midpoint rule. The inner term $\dt^\beta\hat U$ is just the L1 fractional derivative and is computed exactly.

We denote by
\begin{equation}
  \label{eq:E1}  
E^1(t) \approx \left(\int_0^t |u(\tau) -\hat U(\tau)|^2 d\tau\right)^{1/2}
\end{equation}
the exact error in $L^2$ norm approximated by the compound midpoint rule and by
\begin{equation}
  \label{eq:E2}  
E^2(t) = |u(t) -\hat U(t)|
\end{equation}
the exact error at time $t$. The estimators, again approximated by the compound midpoint rule, are denoted by $E^1_{\text{est}}(t)$ and $E^2_{\text{est}}(t)$ corresponding to Theorem~\ref{thm:apost_l2} and Theorem~\ref{thm:apost_linf} respectively.

\begin{table}
  \centering
\subfloat[Uniform mesh with $\beta = 0.8$.]{
  \begin{tabular}{|c|c|c|c|c|}
\hline
 $N$ &  $E^1$ & eoc & $E^1_{\text{est}}$ & eoc\\\hline
10 & $0.01$ &  & $0.054$ & \\
20 & $0.0055$ & 0.89 &$0.029$ & 0.92\\
40 & $0.0029$ & 0.91 &$0.015$ & 0.93\\
80 & $0.0016$ & 0.92 &$0.0080$ & 0.93\\
160 &$0.00081$ &0.94  &$0.0042$ & 0.92\\
320 &$0.00042$ &0.95  &$0.0022$ & 0.92\\\hline
\end{tabular}
}
\subfloat[Graded mesh with $k = 2$ and $\beta = 0.8$.]{
\begin{tabular}{|c|c|c|c|c|}
\hline
  $N$ &  $E^1 $ & eoc & $E^1_{\text{est}}$ & eoc\\\hline
10 &  $0.0069$ &  & $0.042$ & \\
20 &  $0.0032$ & 1.13 &$0.019$ & 1.15\\
40 &  $0.0014$ & 1.16 &$0.0083$ & 1.18\\
80 &  $0.00064$ & 1.17 &$0.0036$ & 1.19\\
160 & $0.00028$ &1.19  &$0.0016$ & 1.20\\
320 & $0.00012$ &1.19  &$0.00069$ & 1.20\\\hline
\end{tabular}
}

\subfloat[Uniform mesh with $\beta = 0.2$.]{
\begin{tabular}{|c|c|c|c|c|}
\hline
 $N$ &  $E^1$ & eoc & $E^1_{\text{est}}$ & eoc\\\hline
$10$ & $0.01$ &  & $0.11$ &  \\
$20$ & $0.0071$ & $0.56$ & $0.076$ & $0.57$ \\
$40$ & $0.0048$ & $0.57$ & $0.051$ & $0.57$ \\
$80$ & $0.0032$ & $0.59$ & $0.034$ & $0.58$ \\
$160$ & $0.0021$ & $0.6$ & $0.023$ & $0.58$  \\\hline
\end{tabular}
}
\subfloat[Graded mesh with $k = 3$ and $\beta = 0.2$.]{
\begin{tabular}{|c|c|c|c|c|}
\hline
 $N$ &  $E^1$ & eoc & $E^1_{\text{est}}$ & eoc\\\hline
$10$ & $0.0011$ &  & $0.012$ &  \\
$20$ & $0.00037$ & $1.6$ & $0.0038$ & $1.63$ \\
$40$ & $0.00012$ & $1.64$ & $0.0012$ & $1.65$ \\
$80$ & $3.8\times 10^{-5}$ & $1.67$ & $0.00038$ & $1.67$ \\
$160$ & $1.2\times 10^{-5}$ & $1.7$ & $0.00012$ & $1.69$ \\
$320$ & $3.5\times 10^{-6}$ & $1.71$ & $3.6\times 10^{-5}$ & $1.7$ \\\hline
\end{tabular}
}
  \caption{The numerical results for the L1 scheme including the error measured in the $L^2$ norm at time $t = 1/2$ and the estimated order of convergence for differerent choices of $\beta$ and with uniform and graded meshes.}
  \label{tab:L2_L1}
\end{table}

In Table~\ref{tab:L2_L1}  we show the results for the L1 scheme using the $L^2$ error measure \eqref{eq:E1} evaluated at $t = 1/2$ and for two values of the parameter $\beta \in \{0.2,0.8\}$.  
We see that suboptimal convergence order is obtained using the uniform mesh, but the optimal convergence order is recovered with the graded mesh. In both cases the estimator converges at the correct order.  Remark~\ref{rem:hatG_error} predicts  the requirement for the graded mesh $k = \frac{4-2\beta}{1+\beta}$, i.e., $k = 4/3$ for $\beta = 0.8$ and $k = 3$ for $\beta = 0.2$. {For $\beta = 0.8$ we see that for $k = 2 > 4/3$ optimal convergence is obtained.} For $k = 4/3$ and $\beta = 0.8$ numerical experiments suggest that asymptotically the optimal convergence is achieved but only for quite small time steps; see Table~\ref{tab:optimal_k}. { For $\beta = 0.2$, we see that the observed rate is slowly approaching the optimal rate 1.8 when the borderline grading $k = 3$ is used.} Numerical experiments not reported here for $k$ smaller than $k = 3$ for $\beta = 0.2$ give less than optimal convergence order.

\begin{table}
  \centering
  \begin{tabular}{|c|cccc|}\hline
    $N$& 160 & 320 & 640 & 1280\\\hline
    eoc. & 1.114 & 1.126&1.136 & 1.144\\\hline
    eoc.(est) & 1.130 & 1.136 & 1.141 & 1.146\\ \hline
  \end{tabular}
  \caption{Estimated convergence order in the $L^2$ norm for the L1 scheme. Here $\beta = 0.8$ and $k = 4/3$, which is the grading suggested by Remark~\ref{rem:hatG_error}.}
  \label{tab:optimal_k}
\end{table}

The corresponding results  for CQ based on the backward Euler scheme are shown in Table~\ref{tab:L2_CQ}. Recall that the optimal convergence order for the L1 scheme and the CQ is different; see Remark~\ref{rem:cq_l1_conv}. It is better than linear $O(\tstep^{2-\beta})$ for the L1 scheme and linear for CQ. Further, linear convergence is obtained for the CQ scheme for $\beta > 0.5$ even with the uniform time-step, hence we only consider $\beta  = 0.2$ in the numerical results. Estimate \eqref{CQ_conv} predicts a convergence order $O(\tstep^{\beta+1/2}) = O(\tstep^{0.7})$. In Table~\ref{tab:L2_CQ}, the exact estimated order of convergence is slowly approaching $0.7$. However, our estimator converges at a slighly lower rate or $0.6$. The latter fits with Remark~\ref{rem:hatG_error} which predicts $O(\tstep^{\frac{1+\beta}2}) = O(\tstep^{0.6})$. Again, for $k  =2$, optimal convergence is obtained, i.e., in the case of CQ, linear convergence.

\begin{table}
  \centering
\subfloat[Uniform mesh with $\beta = 0.2$.]{
\begin{tabular}{|c|c|c|c|c|}\hline
 $N$ &  $E^1$ & eoc & $E^1_{\text{est}}$ & eoc\\\hline
$10$ & $0.0069$ &  & $0.11$ &  \\
$20$ & $0.0047$ & $0.55$ & $0.072$ & $0.58$ \\
$40$ & $0.0032$ & $0.57$ & $0.048$ & $0.58$ \\
$80$ & $0.0021$ & $0.59$ & $0.032$ & $0.58$ \\
$160$ & $0.0014$ & $0.60$ & $0.021$ & $0.59$   \\
$320$ & $0.00091$ & $0.61$ & $0.014$ & $0.59$   \\
$640$ & $0.00059$ & $0.62$ & $0.0094$ & $0.59$   \\
$1280$ & $0.00038$ & $0.63$ & $0.0063$ & $0.59$   \\
\hline
\end{tabular}
}
\subfloat[Graded mesh with $k = 2$ and $\beta = 0.2$.]{
\begin{tabular}{|c|c|c|c|c|}
\hline
 $N$ &  $E^1$ & eoc & $E^1_{\text{est}}$ & eoc\\\hline
$10$ & $0.0041$ &  & $0.031$ &  \\
$20$ & $0.0023$ & $0.86$ & $0.014$ & $1.1$ \\
$40$ & $0.0012$ & $0.9$ & $0.0067$ & $1.1$ \\
$80$ & $0.00064$ & $0.93$ & $0.0032$ & $1.09$ \\
$160$ & $0.00033$ & $0.95$ & $0.0015$ & $1.08$ \\  
\hline
\end{tabular}
}
  \caption{The numerical results for the backward Euler based CQ including the error measured in the $L^2$ norm at time $t = 1/2$ and the estimated order of convergence  with uniform and graded meshes. The fractional power is $\beta = 0.2$ throughout.}
  \label{tab:L2_CQ}
\end{table}

The corresponding results in  the $L^\infty$ error measure \eqref{eq:E2} evaluated at $t = 1/2$ are shown in Table~\ref{tab:Linf_L1} and Table~\ref{tab:Linf_CQ}. Again the estimator converges at the correct order. As indicated before,  convolution quadrature does not require a graded mesh to reach its optimal convergence order, i.e., linear,  in this norm at a fixed time away from the singularity.

\begin{table}
  \centering
\subfloat[Uniform mesh with $\beta = 0.8$.]{
\begin{tabular}{|c|c|c|c|c|}
\hline
 $N$ &  $E^2$ & eoc & $E^2_{\text{est}}$ & eoc\\\hline
10 & $0.012$ &  & $0.039$ & \\
20 & $0.0060$ & 0.95 &$0.020$ & 0.96\\
40 & $0.0030$ & 1.0 &$0.010$ & 0.97\\
80 & $0.0015$ & 1.0 &$0.0053$ & 0.98\\
160 & $0.00076$ &1.0  &$0.0027$ & 0.98\\
320 & $0.00038$ &1.0  &$0.0014$ & 0.98\\
\hline
\end{tabular}
}
\subfloat[Graded mesh with $k = 2$ and $\beta = 0.8$.]{
\begin{tabular}{|c|c|c|c|c|}
\hline
 $N$ &  $E^2$ & eoc & $E^2_{\text{est}}$ & eoc\\\hline
10 & $0.011$ &  & $0.037$ & \\
20 & $0.0051$ & 1.14 &$0.017$ & 1.12\\
40 & $0.0023$ & 1.17 &$0.0076$ & 1.16\\
80 & $0.0010$ & 1.18 &$0.0034$ & 1.18\\
160 & $0.00044$ &1.19  &$0.0015$ & 1.19\\
320 & $0.00019$ &1.19  &$0.00065$ & 1.19\\
\hline
\end{tabular}
}
  \caption{The numerical results for the L1 scheme including the error measured in the $L^\infty$ norm at time $t = 1/2$ and the estimated order of convergence for differerent choices of $\beta$ and with uniform and graded meshes.}
  \label{tab:Linf_L1}
\end{table}

\begin{table}
  \centering
\subfloat[Uniform mesh with $\beta = 0.8$.]{
\begin{tabular}{|c|c|c|c|c|}\hline
 $N$ &  $E^2$ & eoc & $E^2_{\text{est}}$ & eoc\\\hline
$10$ & $0.0088$ &  & $0.027$ &  \\
$20$ & $0.0045$ & $0.98$ & $0.014$ & $0.98$ \\
$40$ & $0.0022$ & $0.99$ & $0.007$ & $0.99$ \\
$80$ & $0.0011$ & $1$ & $0.0035$ & $0.99$ \\
$160$ & $0.00056$ & $1$ & $0.0018$ & $0.99$\\\hline
\end{tabular}   
}
\subfloat[Uniform mesh with $\beta = 0.2$.]{
\begin{tabular}{|c|c|c|c|c|}\hline
 $N$ &  $E^2$ & eoc & $E^2_{\text{est}}$ & eoc\\\hline
$10$ & $0.0026$ &  & $0.016$ &  \\
$20$ & $0.0013$ & $1.0$ & $0.0077$ & $1.0$ \\
$40$ & $0.00064$ & $1.0$ & $0.0038$ & $1.0$ \\
$80$ & $0.00032$ & $1.0$ & $0.0019$ & $1.0$ \\
$160$ & $0.00016$ & $1.0$ & $0.00095$ & $1.0$\\\hline
\end{tabular}
}
  \caption{The numerical results for the backward Euler based CQ including the error measured in the $L^\infty$ norm at time $t = 1/2$ and the estimated order of convergence  with uniform meshe for $\beta = 0.8$ and $\beta = 0.2$.}
  \label{tab:Linf_CQ}
\end{table}

\subsection{Fractional differential equation without an analytic solution}
{
For completeness, we investigate a fractional differential equation where an analytic expression for the solution does not exist:
\begin{equation}
  \label{eq:ode_01}  
\dt^\beta u(t)+u(t) = 2\cos(t), \qquad u(0) = 1, \qquad t \in (0,T).
\end{equation}
We repeat just a single experiment with the L1 scheme and $\beta =0.8$. Only the error $E^2(t)$ for $t = 1/2$ is investigated. The results for the uniform and graded meshes are shown Table~\ref{tab:Linf_L1_noexact} with the results analogous to the corresponding experiment in the previous section (see Table~\ref{tab:Linf_L1}). As the exact solution is not available, we make use of a graded, fine mesh  with  $k = 2$ to obtain that $u(1/2) = 1.3877\dots$, correct to the digits shown.}

\begin{table}
  \centering
\subfloat[Uniform mesh with $\beta = 0.8$.]{
\begin{tabular}{|c|c|c|c|c|}
\hline
 $N$ &  $E^2$ & eoc & $E^2_{\text{est}}$ & eoc\\\hline
10 & $0.015$ &  & $0.047$ & \\
20 & $0.0072$ & 1.0 &$0.024$ & 0.99\\
40 & $0.0035$ & 1.0 &$0.012$ & 1.0\\
80 & $0.0017$ & 1.0 &$0.0060$ & 1.0\\
160 & $0.00084$ &1.0  &$0.0030$ & 1.0\\
\hline
\end{tabular}
}
\subfloat[Graded mesh with $k = 2$ and $\beta = 0.8$.]{
\begin{tabular}{|c|c|c|c|c|}
\hline
 $N$ &  $E^2$ & eoc & $E^2_{\text{est}}$ & eoc\\\hline
10 & $0.016$ &  & $0.050$ & \\
20 & $0.0071$ & 1.14 &$0.023$ & 1.13\\
40 & $0.0032$ & 1.17 &$0.010$ & 1.16\\
80 & $0.0014$ & 1.19 &$0.0045$ & 1.18\\
160 & $0.00060$ &1.21  &$0.0020$ & 1.19\\
\hline
\end{tabular}
}
  \caption{The numerical results for the L1 scheme applied to the problem \eqref{eq:ode_01} for which we have no exact solution. We include the error measured in the $E^2$ norm \eqref{eq:E2} at time $t = 1/2$ and the estimated order of convergence for  $\beta = 0.8$ with uniform and graded meshes.}
  \label{tab:Linf_L1_noexact}
\end{table}

\subsection{Fractional differential equation with non-smooth $f$}\label{sec:nonsmooth}
Including a simple correction term can recover optimal convergence if $f$ is smooth for both the L1 scheme \cite{Yan2018} and convolution quadrature \cite{Jin2017}. However, if $f$ is not globally smooth, non-uniform time-steps are needed for both the schemes. 

To illustrate this we consider the fractional differential equation
\begin{equation}
  \label{eq:ode_1}  
\dt^\beta u+\lambda u = f, \qquad u(0) = u^0, \qquad t \in (0,T),
\end{equation}
with $\lambda > 0$ a fixed constant and $f$ chosen so that the exact solution is given by
\[
u(t) = 1+t^\beta+H(t-r)t^\beta,
\]
where $H(\cdot)$ is the Heaviside function and $r \in (0,T)$ some constant. In this case the right-hand side $f$ is piecewise smooth
\[
f(t) = \lambda u(t) +\Gamma(\beta+1)(1+H(t-r)).
\]

Instead of a priori defining a graded mesh towards $t = 0$ and $t = r$, we will adaptively construct a non-uniform mesh using the developed estimators. We denote again by $E^2_{\text{est}}$ the $L^\infty$ estimate and given a parameter $\theta \in (0,1)$ we mark the interval $I_n$ if 
\[
E^2_{\text{est}}(t_{n+1}) \geq \theta \max_{j = 1,\dots,N} E^2_{\text{est}}(t_{j}).
\]
To obtain a refined mesh, each marked interval is split into two. 

To investigate convergence, we use as the error measure the maximum error
\[
e^{\max} = \max_{j = 1,\dots,N} E^2(t_j)
\]
and the corresponding a posteriori error estimate
\[
e^{\max}_{\text{est}} = \max_{j = 1,\dots,N} E^2_{\text{est}}(t_j).
\]

\begin{figure}
  \centering
  \includegraphics[width=.46\textwidth]{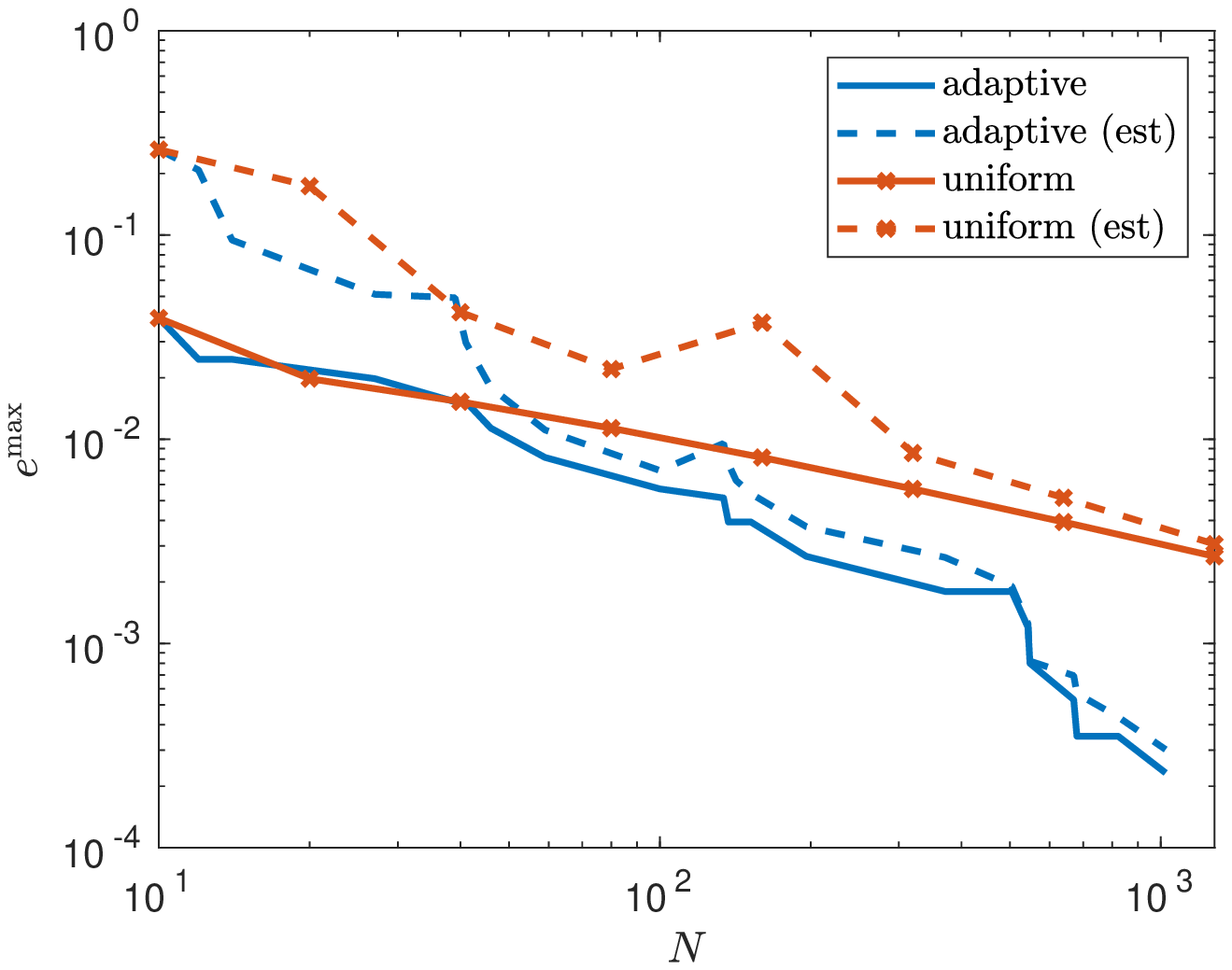}
  \includegraphics[width=0.46\textwidth]{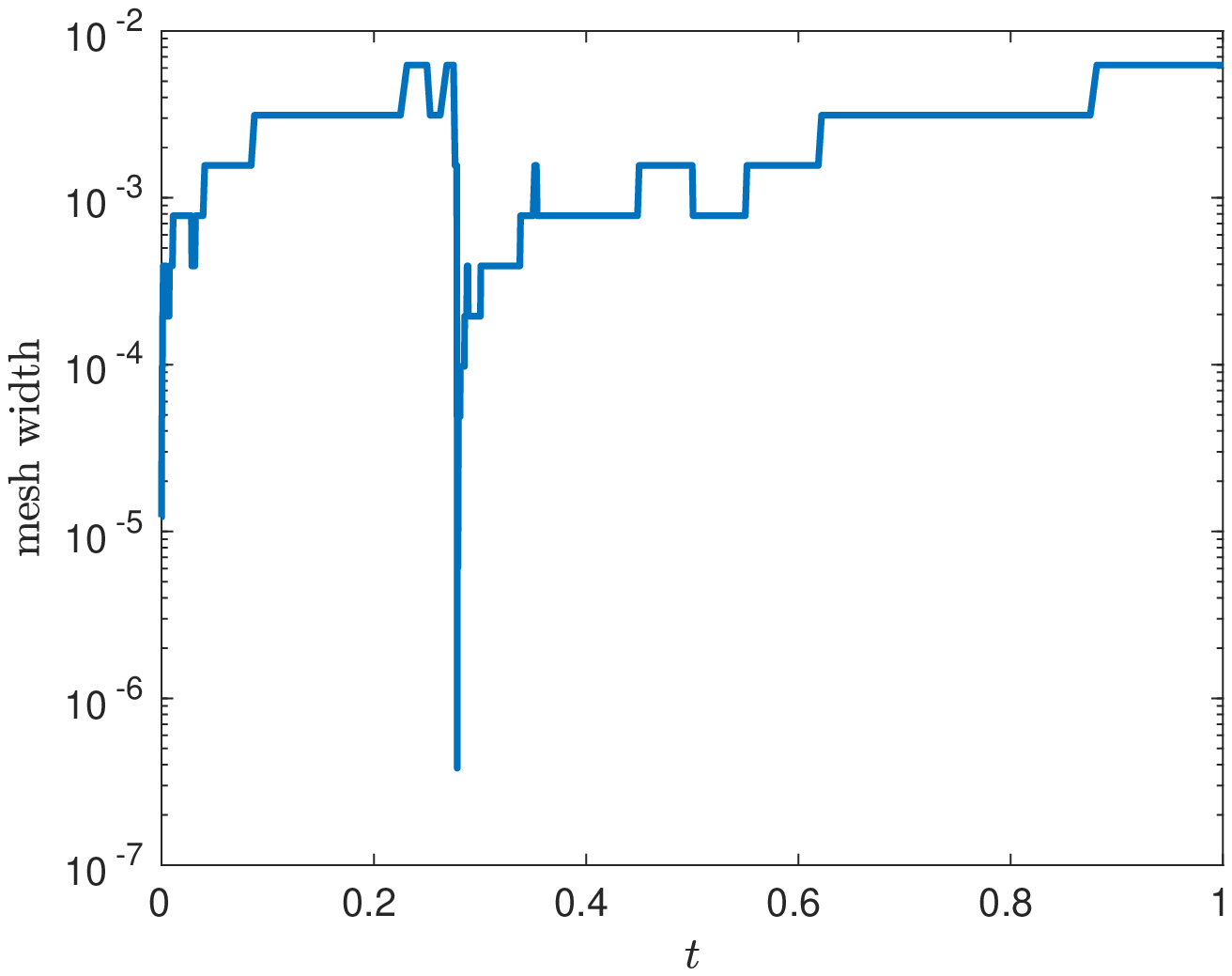}
  \caption{On the left we compare the convergence of the L1 adaptive scheme  with the uniform discretization. The fractional power is $\beta = 0.6$. The time-steps chosen adaptively are shown on the right. Refinement near the singularities at $t = 0$ and $t = 0.28$ can clearly be seen.}
  \label{fig:L1_max}
\end{figure}

\begin{figure}
  \centering
  \includegraphics[width=.6\textwidth]{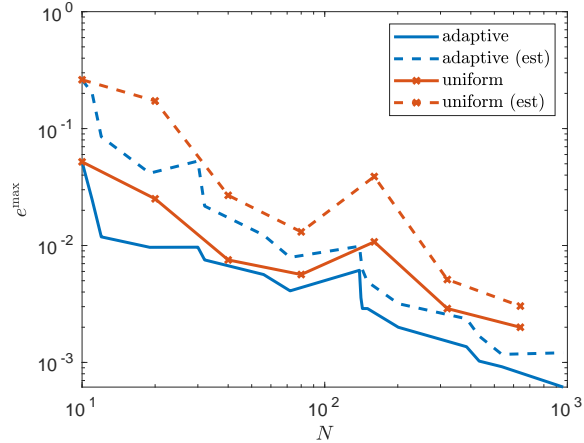}
  \caption{Comparison of the convergence of the CQ adaptive scheme compared with the uniform discretization. The fractional power is $\beta = 0.6$.}
  \label{fig:CQ_max}
\end{figure}

We perform experiments with $\beta = 0.6$, $\theta  = 0.75$,  and the singularity at $r = 0.28$. 
The comparison of the convergence in the above norms of the uniform schemes with the above described adaptive scheme are shown on the left of Figure~\ref{fig:L1_max} for the L1 scheme and in Figure~\ref{fig:CQ_max} for CQ. We see that the adaptive scheme in both cases does considerably better than the scheme with a uniform time step. The difference would be much more pronounced for a smaller $\beta$ and hence less smooth $u$.
On the right of  Figure~\ref{fig:L1_max} we plot the time-steps chosen by the scheme. Clearly the adaptive scheme was able to locate the singularities near $t =0$ and $t = r$. 

\subsection{Subdiffusion equation with FEM in space}
In this section we consider a subdiffusion equation on a 1D domain $\Omega = (0,\pi)$: Find $u(t) \in H^1_0(\Omega)$ such that
\begin{equation}
  \label{eq:pde_0}  
  \begin{aligned}    
\dt^\beta u-\Delta  u &= f & &\text{for } (t,x) \in (0,T) \times \Omega,\\
u(t,x) &= 0 & &\text{for } (t,x) \in (0,T) \times \partial\Omega,\\
 u(0,x) &= u^0(x) & &\text{for } x \in \Omega.
  \end{aligned}
\end{equation}
The right-hand side  $f$ is chosen so that the exact solution is given by
\[
u(t,x) = (1+t^\beta+H(t-r)t^\beta)\sin x
\]
with  $r \in (0,T)$ some constant. In this case the right-hand side $f$ is piecewise smooth in time
\[
f(t,x) =  u(t,x) +\Gamma(\beta+1)(1+H(t-r))\sin x.
\]

Choosing a finite dimensional subspace $V_h \subset H^1_0(\Omega)$, the Galerkin discretization of \eqref{eq:pde_0} results in the problem: Find $\ub(t) \in V_h$
\begin{equation}
  \label{eq:pde_0_gal}
  \begin{aligned}    
  \dt^\beta \ub + A_h \ub &= P_h f & \text{for } t \in (0,T)\\
\ub(0) &= P_hu^0. & 
  \end{aligned}
\end{equation}
$A_h: V_h \rightarrow V_h$ is the Galerkin discretization of the Dirichlet Laplacian defined by
\[
(A_h u, v)_{L^2(\Omega)} = (\nabla u, \nabla v)_{L^2(\Omega)} \qquad
\text{for all } u,v \in V_h.
\]
The operator $P_h : L^2(\Omega) \rightarrow V_h$ is the $L^2$-projection defined by
\[
(P_h u, v)_{L^2(\Omega)} = (u,v)_{L^2(\Omega)} \qquad \text{for all } v \in V_h.
\]
The operator $A_h$ satisfies all the assumptions required so that this problem fits into the setting \eqref{eq:sub_diff} with $A = A_h$, $V = H = V_h$.
 The above setting is used for convenience and in order to demonstrate the behaviour of the time-estimators derived in this paper. 
A complete a posteriori and adaptive treatment of \eqref{eq:pde_0}  will require dynamical change of the finite element spaces (i.e. variable with $n$ discrete spaces $V_h^n$)  in the scheme definition and an analysis taking into account the spatial discretisation error. 
Given the ``multistep nature'' of the time-discretisation of  the fractional equation \eqref{eq:pde_0}, even the definition of the scheme requires special attention. 
Still, point-wise  representations of the fully-discrete methods are possible using the elliptic reconstruction, \cite{MN:siam2003}. In this case the application of the framework derived herein  will be applicable. 
However, this is not a straightforward task, compare with \cite {MR2404052, LM:mc2006} for parabolic problems, and it will be the subject of a forthcoming work. 

In the numerical experiments we fix $V_h$ to a piecewise quadratic finite element space on a uniform spatial mesh with mesh-width $h > 0$. The discretization in time is achieved using the adaptive L1 scheme described in the previous section up to time $T = 1$. As the error measure we use
\begin{equation}
  \label{eq:max_fem_norm}
e^{\max} = \max_{j = 1,\dots,N} \|P_hu(t_j)-U_j\|_{L^2(\Omega)},  
\end{equation}
whereas the a posteriori measure is
\begin{equation}
  \label{eq:max_fem_adap_norm}  
e^{\max}_{\text{est}} = \max_{j = 1,\dots,N} E^2_{\text{est}}(t_j),
\end{equation}
where $E^2_{\text{est}}(t_j)$ is obtained using Theorem~\ref{thm:apost_linf}.

The numerical results for $\beta = 0.2$ and $r = 0.28$ are shown in Figure~\ref{fig:fem_plot}. In the mark-and-refine scheme we again set $\theta = 0.75$. Similar results as in the previous section are obtained. 
\begin{figure}
  \centering
  \includegraphics[width=0.4\textwidth]{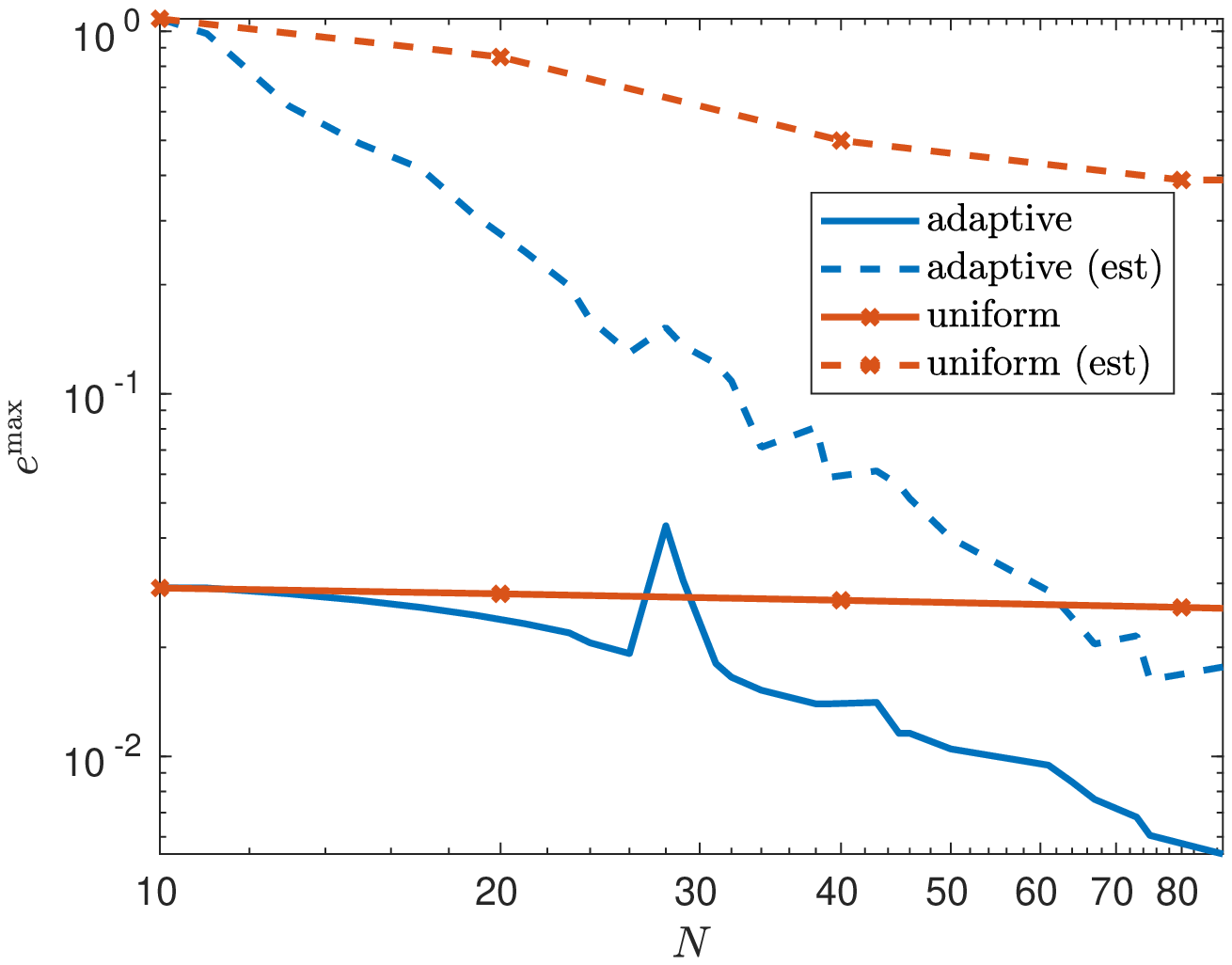}
  \includegraphics[width=0.55\textwidth]{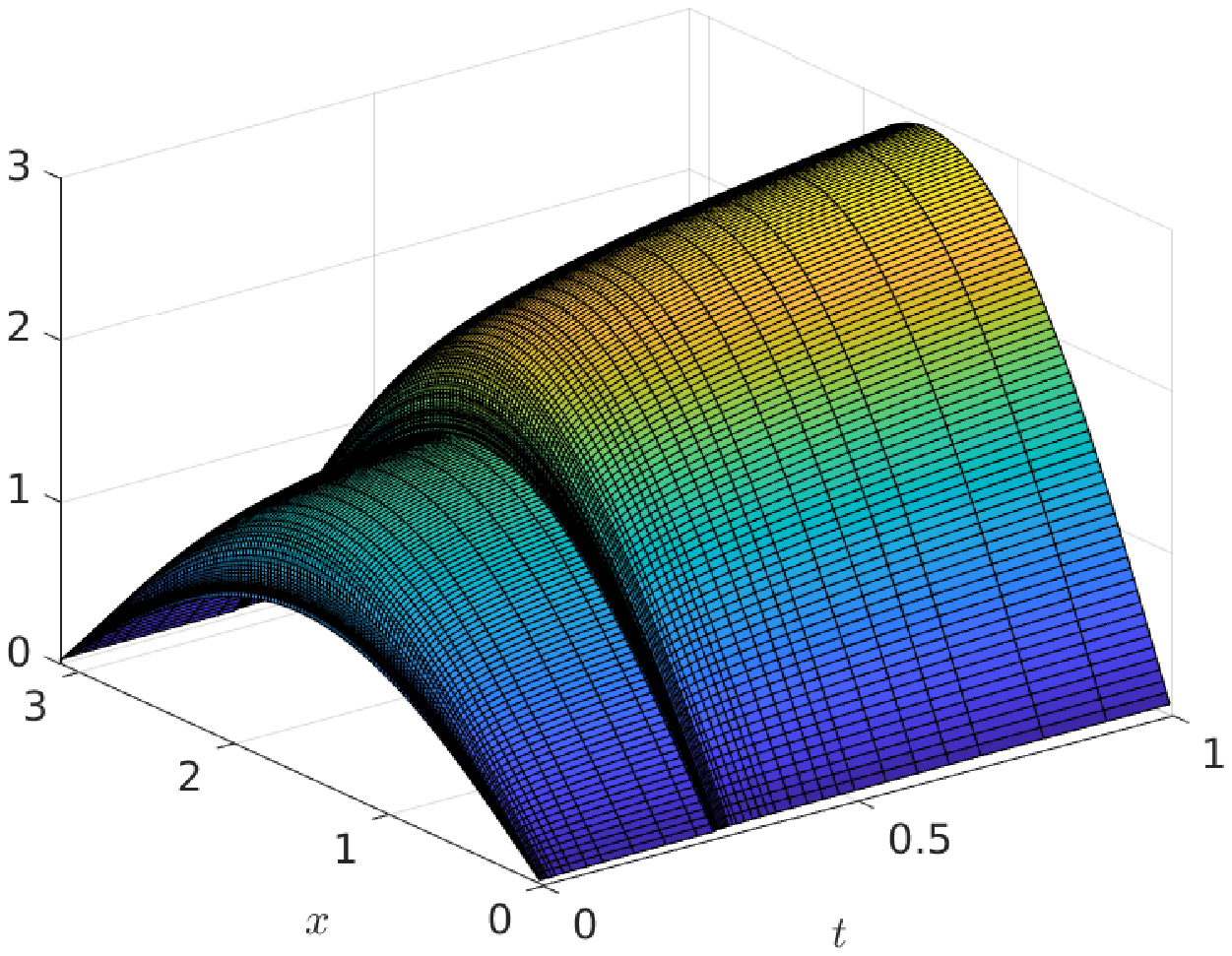}
  \caption{Numerical results for the subdiffusion equation \eqref{eq:pde_0} with $\beta = 0.2$. On the left we plot the convergence of the adaptive scheme in the maximum norm \eqref{eq:max_fem_norm} and \eqref{eq:max_fem_adap_norm} and compare it with the scheme with uniform time-step.  On the right is a plot of the solution where in the visible grid adaptive refinement towards the singularities at $t = 0$ and $t = 0.28$ can be discerned.}
  \label{fig:fem_plot}
\end{figure}
\appendix

\section{Non-uniform convolution quadrature}
Let $K$ be  a sectorial operator, i.e., for some $\theta \in (0,\pi/2)$ and $\mu \in \mathbb{R}$, K is analytic in $\mathbb{C} \setminus \{ z \;;\; |\arg z| < \pi -\theta\}$ and satisfies
\[
|K(s)| \leq C |s|^{\mu} \qquad s \in \mathbb{C} \setminus \{ z \;;\; |\arg z| < \pi -\theta\}.
\]
Denoting by $k = \mathscr{L}^{-1}K$ the inverse Laplace transform of $K$, we consider the convolution
\[
u(t) = \int_0^tk(t-\tau)g(\tau)d\tau,
\]
If $\mu < 0$, $k$ is integrable otherwise the integral needs to be understood as a Hadamard finite part integral or equivalently setting $K_m(s) = s^{-m} K(s)$ with $m = \lceil \mu \rceil$ we have
\[
u(t) = \frac{d^m}{dt^m} \int_0^tk_m(t-\tau)g(\tau)d\tau,
\]
where $k_m = \mathscr{L}^{-1}K_m$ .

Replacing $k$ with the inverse Laplace transform of $K$ and exchanging integrals gives
\begin{equation}\label{eq:var_const}
u(t) = \frac1{2\pi i} \int_{\sigma\pm i\infty} K(s) \int_0^t e^{s(t-\tau)} g(t) ds
=  \frac1{2\pi i} \int_{\sigma\pm i\infty} K(s) y(t;s)ds,
\end{equation}
where  $\sigma > 0$ is a constant and $y$ solves the ODE
\[
y' = sy+g \qquad y(0) = 0.
\]
This calculation can be justified for $\mu < 0$. For $\mu \geq 0$, it is a formal argument that nevertheless leads to a well-defined numerical scheme.

Convolution quadrature is obtained by discretizing the ODE and substituting the result in \eqref{eq:var_const}. Applying backward Euler discretization to the ODE gives
\[
\frac1{\tstep_{n-1}}(y_n - y_{n-1}) = s y_n +g(t_n) \qquad y_0 = 0. 
\]
Solving the recursion gives
\[
  \begin{split}    
y_{n+1} &= (1-s\tstep_n)^{-1}\left[y_n+\tstep_{n-1} g(t_{n+1})\right]\\
    &=  \sum_{j = 0}^n \tstep_j\prod_{k = j}^n(1-s\tstep_{k})^{-1}g(t_{j+1}),
  \end{split}
\]
Hence 
\begin{equation}
  \label{eq:gen_cq}  
u_{n+1} = \sum_{j = 0}^n \cqw_{n,j}(K) g(t_{j+1})
\end{equation}
where
\[
  \begin{split}    
\cqw_{n,j}(K) &= \tstep_j\frac1{2\pi i} \int_{\sigma- i\infty}^{\sigma+i\infty}  K(s) \prod_{k = j}^n(1-s\tstep_{k})^{-1}ds\\
&= \tstep_j\frac1{2\pi i} \int_{\mathcal{C}}K(s) \prod_{k = j}^n(1-s\tstep_{k})^{-1}ds,
  \end{split}
\]
where $\mathcal{C}$ is a negatively oriented contour contained in the right-hand complex plane surrounding the poles at $s = \tstep_k^{-1}$. In this form, we see that no condition on the growth parameter $\mu$ of $K$ is needed for the weights to be well-defined.

An important property of convolution quadrature is the composition rule. Let $K(s) = K_1(s)K_2(s)$, then
\[
u_{n+1} = \sum_{j = 0}^n \cqw_{n,j}(K) g(t_{j+1})
= \sum_{j = 0}^n \cqw_{n,j}(K_2) \sum_{\ell = 0}^j \cqw_{j,\ell}(K_1) g(t_{\ell+1}).
\]
This property was shown  in \cite{LopS16} for $K_1(s) = K_2^{-1}(s)$ by using properties of divided differences. We explain the steps for general $K_1$ and $K_2$. 
\[
  \begin{split}    
\cqw_{n,j}(K) &= \tstep_j \prod_{k = j}^n(-\tstep_k)^{-1}\frac1{2\pi i} \int_{\mathcal{C}}K(s) \frac1{\prod_{k = j}^n(\tstep_k^{-1}-s)}ds\\
&= -\left(\prod_{k = j+1}^n(-\tstep_k)^{-1}\right)\frac1{2\pi i} \int_{\mathcal{C}}K(s) \frac1{\prod_{k = j}^n(s-\tstep_k^{-1})}ds\\
&= \left(\prod_{k = j+1}^n(-\tstep_k)^{-1}\right)[\tstep_j^{-1},\dots,\tstep_n^{-1}]K,
\end{split}
\]
where $[x_0,\dots,x_n]f$ are Newton's divided differences. For the equivalence of the above contour integral and divided differences see \cite[Equation (51)]{deBoor}; note that the negative sign is absorbed due to the contour $\mathcal{C}$ being negatively oriented.

 To show the composition rule we need the Leibniz rule for divided differences \cite[Corollary 28]{deBoor}
\[
[x_0,\dots,x_n](fg) = \sum_{k = 0}^n [x_0,\dots,x_k]f [x_k,\dots,x_n]g.
\]
Hence
\[
  \begin{split}
    \cqw_{n,j}(K) &=\left(\prod_{k = j+1}^n(-\tstep_k)^{-1}\right)[\tstep_j^{-1},\dots,\tstep_n^{-1}]K_1K_2\\
&= \sum_{\ell = j+1}^n\left(\prod_{k = j+1}^n(-\tstep_k)^{-1}\right)[\tstep_j^{-1},\dots,\tstep_\ell^{-1}] K_1 [\tstep_{\ell}^{-1},\dots,\tstep_n^{-1}]K_2\\
&= \sum_{\ell = j+1}^n\left(\prod_{k = j+1}^\ell(-\tstep_k)^{-1}\right)[\tstep_j^{-1},\dots,\tstep_\ell^{-1}] \left(\prod_{k = \ell+1}^n(-\tstep_k)^{-1}\right)K_1 [\tstep_{\ell}^{-1},\dots,\tstep_n^{-1}]K_2\\
&= \sum_{\ell = j+1}^n \cqw_{\ell,j}(K_1) \cqw_{n,\ell}(K_2).
  \end{split}
\]
Substituting this into \eqref{eq:gen_cq} gives the composition rule.

Let us now apply all this to the fractional derivative where $K(s) = s^\beta = K_1(s) K_2(s)$ with $K_1(s) = s^{\beta-1}$, $K_2(s) = s$. Using the divided difference definition we see that
\[
\cqw_{n,n}(K_2) = \tstep_n^{-1}, \; \cqw_{n,n-1}(K_2) = -\tstep_n^{-1}, \cqw_{n,j} = 0, \qquad j< n-1\]
whereas $\cqw_{n,j}(K_1)$ are, by definition, as in \eqref{eq:cq_weights}.

The definition of convolution used in this section gives rise to the Riemann-Liouville derivative
\[
^\text{RL}_{\;\;\;0}\dt^\beta u = \frac{d}{dt} \frac1{\Gamma(1-\beta)} \int_0^t (t-\tau)^{-\beta} u(\tau) d\tau.
\]
In order to apply convolution quadrature to the Caputo derivative we rewrite it as 
a Riemann-Liouville derivative using the identity
\[
^\text{C}_{0}\dt^\beta u =  ^\text{RL}_{\;\;\;0}\dt^\beta (u-u(0))
\]
valid for sufficiently smooth $u$. Hence given a sequence of values $U_0, \dots, U_N$ we apply convolution quadrature to the sequence $U_j-U_0$, $j = 0,\dots,N$ giving
\[
  \begin{split}
 \sum_{j = 0}^n \cqw_{n,j}(K) (U_{j+1}-U_0) &= \sum_{j = 0}^n \cqw_{n,j}(K_1) \sum_{\ell = 0}^j \cqw_{j,\ell}(K_2) (U_{\ell+1}-U_0)\\
&= \sum_{j = 0}^n \cqw_{n,j}(K_1) \frac1{\tstep_j}\left(U_{j+1}-U_j\right),
  \end{split}
\]
using the composition rule. This is the  representation \eqref{eq:CQ_defn} used in this paper.


\def\cprime{$'$}

\end{document}